
\documentclass[article,authoryear,jmlmc]{beg_32}             

\usepackage[hang]{footmisc}
\setlength{\footnotemargin}{0in}
\frenchspacing
\fancypagestyle{plain}{%
  \fancyhf{}
  \fancyhead[R]{\small {\it \jname}, x(x):\thepage--\pageref{LastPage} (\myyear\today)}
  \fancyfoot[R]{\small\bf\thepage }
  \fancyfoot[L]{\fottitle}
  }
\usepackage{graphicx}%
\usepackage{multirow}%
\usepackage{amsmath,amssymb,amsfonts}%
\usepackage{amsthm}%
\usepackage{mathrsfs}%
\usepackage{xcolor}%
\usepackage{textcomp}%
\usepackage{manyfoot}%
\usepackage{booktabs}%
\usepackage{listings}%
\usepackage[utf8]{inputenc}
\usepackage[T1]{fontenc}
\usepackage{hyperref}
\usepackage{url}
\usepackage{bm}
\usepackage{nicefrac}
\usepackage{microtype}
\usepackage{verbatim}
\usepackage{svg}
\usepackage{array}
\usepackage{placeins}


\newcommand{\Dom}{\mathrm{Dom}\,}

\DeclareMathOperator{\Ran}{Ran}

\newcommand{\Tr}{\operatorname{Tr}}

\newcommand{\Mall}{\mathcal{D}} 
\newcommand{\Frechet}{\nabla} 
\newcommand{\CMspace}{\mathcal{H}_t} 


\newtheorem{proposition}[theorem]{Proposition}%
\newtheorem{definition}{Definition}%


\renewcommand{\myyear}{2026}
\renewcommand{\today}{}
\begin{document}
\volume{Volume x, Issue x, \myyear\today}
\title{Score-Based Diffusion Models in Infinite Dimensions: A Malliavin Calculus Perspective}
\titlehead{Score-Based Diffusion Models in Infinite Dimensions}
\authorhead{E. Mirafzali, F. Proske, D. Venturi, \& R. Marinescu}
\corrauthor[1]{Ehsan Mirafzali}
\author[2]{Frank Proske}
\author[3]{Daniele Venturi}
\author[1]{Razvan Marinescu}
\corremail{smirafza@ucsc.edu}
\corraddress{Department of Computer Science, University of California Santa Cruz, Santa Cruz, California, USA}
\address[1]{Department of Computer Science, University of California Santa Cruz, Santa Cruz, California, USA}
\address[2]{Department of Mathematics, University of Oslo, Oslo, Norway}
\address[3]{Department of Applied Mathematics, University of California Santa Cruz, Santa Cruz, California, USA}
\dataO{01/07/2026}
\dataF{01/07/2026}
\abstract{We study score-based diffusion modelling in infinite-dimensional separable Hilbert spaces through Malliavin calculus, extending the analysis of generative models beyond the finite-dimensional setting. The forward diffusion process is formulated as a linear stochastic partial differential equation (SPDE) driven by space--time coloured noise with a trace-class covariance operator, ensuring well-posedness in arbitrary spatial dimensions. Building on Malliavin calculus and an infinite-dimensional extension of the Bismut--Elworthy--Li formula, we derive a closed-form expression for the logarithmic derivative of the transition measure along Cameron--Martin directions, which serves as the natural infinite-dimensional analogue of the score function. Our operator-theoretic approach preserves the intrinsic geometry of Hilbert spaces and accommodates general trace-class operators, thereby incorporating spatially correlated noise without assuming semigroup invertibility. We validate the derived score formula numerically for several classes of linear SPDEs in both one and two spatial dimensions using spectral methods.}
\keywords{Malliavin calculus, Stochastic partial differential equations, Score-based diffusion models, Infinite-dimensional diffusion, Bismut--Elworthy--Li formula, Logarithmic derivative}
\maketitle

\section{Introduction}\label{sec1}
Recent advances in diffusion generative models \citep{pmlr-v37-sohl-dickstein15, ho2020denoising, Song2021} have substantially advanced data synthesis. 
These models rely on the so-called score function \citep{Hyvarinen2005, HYVARINEN20072499, Song2019, ni2025divergencekernelmethodscoresrandom}, the gradient of the log-density, to generate an iterative denoising process that reverses a forward diffusion process, achieving state-of-the-art performance in many practical applications such as image and audio generation, and design of new molecular structures. 
Classical score-based diffusion models are defined in finite, yet often very high-dimensional, spaces. Extending these models to infinite dimensions, such as those arising in functional data analysis, presents significant mathematical challenges.
For instance, infinite-dimensional spaces lack a Lebesgue measure, making traditional density gradients ill-defined. Instead, one must work with logarithmic derivatives (or Fomin derivatives) of measures along appropriate directions, which are well-defined only in the Cameron--Martin space associated with the underlying Gaussian measure. Moreover, for diffusion processes governed by stochastic partial differential equations (SPDEs), the noise must often be regularised to guarantee well-posedness of the initial-boundary value problem. In the case of abstract SPDEs of the form 
\begin{equation}
\label{eq:spde}
du(t) = A u(t)\,dt + Q^{1/2} dW_t, \quad u(0)=u_0\in H,
\end{equation}
where $H$ is a separable Hilbert space, $A$ is a densely defined unbounded linear operator (e.g., the Laplacian with Dirichlet or Neumann boundary conditions), and $W_t$ is a cylindrical Wiener process, this regularisation is achieved by taking $Q^{1/2}: U \to H$ to be Hilbert-Schmidt\footnote{More precisely, \( W_t \) is a cylindrical Wiener process on a separable Hilbert space \(U\) (hence not \(U\)-valued), and \( Q^{1/2}: U \to H \) is a Hilbert-Schmidt operator. The induced covariance operator $Q := Q^{1/2}(Q^{1/2})^* \in \mathcal{L}_1(H)$ is then trace-class on $H$.} \citep{daprato2014stochastic}, so that $Q^{1/2} dW_t$ defines spatially correlated (coloured) noise.

The main objective of this paper is to study score-based diffusion modelling in infinite dimensions using Malliavin calculus~\citep{malliavin:78:stochastic}. Our analysis extends the recent work \citep{mirafzali2025malliavincalculusscorebaseddiffusion}; \citep{mirafzali2025malliavincalculusapproachscore} from finite-dimensional diffusion generative models -- where it has proved effective in deriving exact closed-form expressions for the score function -- to the infinite-dimensional setting of linear SPDEs with additive Gaussian noise. 
Our work is closely related to that of~\citep{greco2025malliavingammacalculusapproachscore}, who studied a similar problem in infinite-dimensional score-based diffusion generative models using Malliavin calculus. In their formulation, the score function is identified (via time reversal) as a Malliavin derivative and corresponds to a conditional expectation, and the approach uses Gaussian Ornstein--Uhlenbeck noising defined via Dirichlet forms and is built around $\Gamma$-calculus. Likewise, \citep{pidstrigach2025conditioning} employ Malliavin calculus for conditioned diffusions to derive a Tweedie-like formula for the score function, although their focus remains on finite-dimensional conditioning mechanisms rather than on score computation in infinite dimensions.
On the other hand, our approach relies on an operator-theoretic derivation of the logarithmic derivative (directional score) for linear SPDEs of the form \eqref{eq:spde} with a general Hilbert-Schmidt operator \(Q^{1/2}: U \to H\), without resorting to any spatial discretisation. By leveraging techniques for differentiating Hilbert-valued processes~\citep{Nualart_Nualart_2018} together with infinite-dimensional extensions of the Bismut--Elworthy--Li formula~\citep{bakhtin2007malliavincalculusinfinitedimensionalsystems,elworthy1994}, we obtain closed-form expressions for the logarithmic derivative in the infinite-dimensional setting.
Other recent contributions formulate diffusion models directly on infinite-dimensional function spaces—either by perturbing functions with a Gaussian process specified by a covariance kernel and deriving a resolution–independent discretised algorithm \citep{lim2025scorebaseddiffusionmodelsfunction}, by casting the dynamics as stochastic evolution equations in Hilbert spaces and implementing them via spatial discretisation \citep{lim2023scorebased}, or by developing an infinite-dimensional formulation with dimension–independent guarantees \citep{pidstrigach2024}. None of these approaches employ Malliavin calculus.

This paper is organised as follows. 
In Section~\ref{sec:notation} we establish notation and define the relevant spaces and topologies. In Section~\ref{sec:methodology} we present our methodology, including the operator-theoretic derivation of the logarithmic derivative for linear SPDEs with space--time coloured noise, the Malliavin covariance operator, and the infinite-dimensional Bismut--Elworthy--Li formula. In Section~\ref{sec:experiments} we verify the score formula numerically in both one and two spatial dimensions. In Section~\ref{sec:summary}, we discuss our main findings and outline possible avenues for future work. 
For the reader's convenience, we provide~\ref{app:background} where we review the relevant mathematical background, including infinite-dimensional diffusion processes generated by linear SPDEs and Malliavin calculus.

\section{Notation and Preliminaries}
\label{sec:notation}

We begin by establishing notation and defining the relevant Hilbert spaces, operator spaces, and topologies used throughout this paper. This section is essential for ensuring mathematical consistency.

\subsection{Hilbert Spaces and Inner Products}

Throughout this paper, we work with the following separable Hilbert spaces:
\begin{itemize}
    \item $H$: the \emph{state space}, typically $L^2(V)$ for some spatial domain $V \subset \mathbb{R}^d$, equipped with inner product $\langle \cdot, \cdot \rangle_H$ and norm $\|u\|_H = \sqrt{\langle u, u \rangle_H}$;
    \item $U$: an auxiliary separable Hilbert space on which the cylindrical Wiener process is defined, equipped with inner product $\langle \cdot, \cdot \rangle_U$ and norm $\|\cdot\|_U$;
    \item $H_W := L^2([0,t]; U)$: the space of square-integrable $U$-valued functions on $[0,t]$, equipped with the inner product
    \begin{equation}
    \langle \eta, \zeta \rangle_{H_W} = \int_0^t \langle \eta(r), \zeta(r) \rangle_U \, dr.
    \label{eq:HW-inner-product}
    \end{equation}
\end{itemize}
All closures of subsets of these Hilbert spaces are taken with respect to the corresponding Hilbert space norm unless otherwise specified.

\subsection{Operator Spaces}

We denote by:
\begin{itemize}
    \item $\mathcal{L}(X, Y)$: the space of bounded linear operators from Hilbert space $X$ to Hilbert space $Y$, with operator norm $\|\cdot\|_{\mathcal{L}(X,Y)}$;
    \item $\mathcal{L}_{\mathrm{HS}}(X, Y)$: the space of \emph{Hilbert-Schmidt operators} from $X$ to $Y$, with norm
    \begin{equation}
    \|T\|_{\mathrm{HS}}^2 = \sum_{k=1}^\infty \|T e_k\|_Y^2,
    \end{equation}
    where $\{e_k\}_{k=1}^\infty$ is any orthonormal basis of $X$ (the sum is independent of the choice of basis);
    \item $\mathcal{L}_1(H)$: the space of \emph{trace-class operators} on $H$, consisting of operators $T \in \mathcal{L}(H)$ for which $\Tr(|T|) < \infty$.
\end{itemize}
For a bounded operator $T \in \mathcal{L}(X, Y)$, we denote its adjoint by $T^* \in \mathcal{L}(Y, X)$.

\subsection{Derivatives}

To avoid confusion, we use distinct notation for different types of derivatives:
\begin{itemize}
    \item $\Frechet \phi$: the \emph{Fr\'echet derivative} of a functional $\phi: H \to \mathbb{R}$, identified via the Riesz representation theorem with an element of $H$;
    \item $\Mall_r F$: the \emph{Malliavin derivative} of a random variable $F$ at time $r \in [0,t]$, which takes values in $U$ for scalar-valued $F$, or in $\mathcal{L}_{\mathrm{HS}}(U, H)$ for $H$-valued $F$;
    \item $D_{u_0} u(t)$: the Fr\'echet derivative of the SPDE solution with respect to the initial condition, which for the linear SPDE~\eqref{eq:spde} equals the semigroup $S(t)$ (see Section~\ref{sec:methodology}, Eq.~\eqref{FVP}).
\end{itemize}

\subsection{The Cameron--Martin Space}
\label{sec:CM-space}

A central concept in infinite-dimensional analysis is the Cameron--Martin space, which provides the natural domain for logarithmic derivatives. For a Gaussian measure $\mu$ on $H$ with covariance operator $\gamma$, the Cameron--Martin space is defined as $\CMspace := \Ran(\gamma^{1/2})$, equipped with the inner product
\begin{equation}
\langle h_1, h_2 \rangle_{\CMspace} := \langle \gamma^{-1/2} h_1, \gamma^{-1/2} h_2 \rangle_H,
\label{eq:CM-inner-product}
\end{equation}
where $\gamma^{-1/2}$ denotes the inverse of $\gamma^{1/2}$ on its range. When $\gamma$ is trace-class (as is the case for our Malliavin covariance operator), the Cameron--Martin space $\CMspace$ is strictly smaller than $H$ (indeed, it has $\mu$-measure zero), but it is precisely the space of directions along which the Gaussian measure admits a logarithmic derivative; see \citep{bogachev2015gaussian, daprato2014stochastic}.

While $\CMspace = \Ran(\gamma_t^{1/2})$ may appear ``extremely small'' compared to $H$, it is in fact the \emph{maximal} space on which the score is well-defined. For typical choices of $A$ and $Q$ (e.g., $A = \Delta$ with Dirichlet boundary conditions and $Q$ diagonal in the eigenbasis of $-A$), $\CMspace$ consists of functions with higher Sobolev regularity than generic elements of $H = L^2$. In practical implementations using finite-dimensional discretisations or projections onto the first $n$ eigenmodes, the projected Cameron--Martin space becomes all of $\mathbb{R}^n$, and our operator formulae correspond exactly to the continuum limit.

\section{Methodology}
\label{sec:methodology}

In this section, we derive a closed-form expression for 
the score function associated with the solution of linear SPDEs of the form \eqref{eq:spde}, driven by space--time (trace class) coloured noise.
Our analysis is based on the following key well-posedness assumptions:

\begin{enumerate}
\item \( H \) is a separable Hilbert space, typically \( L^2(V) \) for some spatial domain \( V \subset \mathbb{R}^d \) (e.g., \( V = (0,1) \) in one dimension), equipped with the inner product \( \langle u, v \rangle_H \) and norm $\left\| u \right\|_H = \sqrt{\left\langle u, u \right\rangle_H}$; \vspace{0.2cm}\label{Afirst}

\item \( A: D(A) \subset H \to H \) is a densely defined, unbounded linear operator (e.g., the Laplacian \( \Delta \) with Dirichlet or Neumann boundary conditions) generating a strongly continuous semigroup \( S(t) = e^{tA} \) on \( H \), satisfying $\left\| S(t) u \right\|_H \leq M e^{\omega t} \left\| u \right\|_H$ for some $M \geq 1$, $\omega \in \mathbb{R}$; \vspace{0.2cm}

\item \( W_t \) is a cylindrical Wiener process on an auxiliary separable Hilbert space \( U \); we do not assume \(U = H\). The process is defined on the filtered probability space \( (\Omega, \mathcal{F}, \{\mathcal{F}_t\}_{t \geq 0}, \mathbb{P}) \) and formally expressed via an orthonormal basis \( \{ f_k \}_{k=1}^\infty \) of \( U \) as
\begin{equation}
W_t = \sum_{k=1}^\infty f_k B_t^k,
\label{F1}
\end{equation}
where \( \{ B_t^k \}_{k=1}^\infty \) are independent standard Brownian motions\footnote{As is well known, the series \eqref{F1} may not 
converge in \( U \) but defines a cylindrical process in a larger 
space.}. \vspace{0.2cm}

\item \( Q^{1/2}: U \to H \) is a Hilbert-Schmidt operator with norm 
\[
\left\| Q^{1/2} \right\|_{\mathrm{HS}}^2 = \sum_{k=1}^\infty \left\| Q^{1/2} f_k \right\|_H^2 < \infty,
\]
where \( \{ f_k \}_{k=1}^\infty \) is an orthonormal basis of \( U \). \vspace{0.2cm}

\item \( u_0 \in H \) is a deterministic initial condition.\vspace{0.2cm}\label{Alast}
\end{enumerate}
The strong form, for intuition, is
\[
\partial_t u\left(t, x\right) = A u\left(t, x\right) + \eta\left(t, x\right), \quad u\left(0, x\right) = u_0\left(x\right), \quad x \in V
\]
with \( \eta(t, x) = Q^{1/2} \xi(t, x) \) and appropriate boundary conditions (e.g., \( u(t, x) = 0 \) on \( \partial V \)). The mild solution is
\[
u(t) = S(t) u_0 + \int_0^t S(t - s) Q^{1/2} \, dW_s,
\]
where the stochastic integral is the Hilbert space It\^o integral in $L^2(\Omega;H)$. For well-definedness, \( Q^{1/2}: U \to H \) is Hilbert-Schmidt, and we require
\begin{equation}\label{eq:HS-condition}
\int_0^t \left\|S(s) Q^{1/2}\right\|_{\mathrm{HS}}^2 \, ds < \infty,
\end{equation}
where
\[
\left\|S(s) Q^{1/2}\right\|_{\mathrm{HS}}^2 = \sum_{j=1}^\infty \left\|S(s) Q^{1/2} f_j\right\|_H^2
\]
for an orthonormal basis \(\{f_j\}_{j=1}^\infty\) of \(U\).

Our objective now is to derive the Malliavin covariance operator of the SPDE solution \( u(t) \). This operator measures the stochastic sensitivity of \( u(t) \) with respect to perturbations in the space--time coloured noise across the entirety of \( H \), providing insights into the regularity and density properties of the law of \( u(t) \) (e.g., via the absolute continuity with respect to a Gaussian measure in \( H \)). With the noise term \( Q^{1/2} dW_t \), this sensitivity is with respect to the coloured noise, reflecting the spatial covariance structure induced by \( Q^{1/2} \), unlike the white noise case where spatial correlations are absent. This, in turn, will allow us to compute a closed-form expression for the logarithmic derivative of the transition measure of infinite-dimensional diffusion processes satisfying \eqref{eq:spde}.

\subsection{Malliavin Covariance Operator}

To compute the Malliavin covariance operator associated with \( u(t) \), we express it in terms of the first variation process, which for state-independent diffusion (i.e., \( Q^{1/2} \)), coincides with the semigroup \( S(t) \). 
This mirrors the finite-dimensional stochastic differential equation (SDE) case \( dx_t = b(x_t) dt + \sigma dW_t \), where the first variation process for state-independent diffusion coincides with the solution of a deterministic linear differential equation driven by the Jacobian of the drift, i.e., \( Y_t = {\partial x_t}/{\partial x_0} \) satisfies \( dY_t = b'(x_t) Y_t dt \), $Y_0=I$. 
For the linear SPDE \eqref{eq:spde} we have the formal solution
\begin{equation}
u(t) = S(t) u_0 + \int_0^t S(t-s) Q^{1/2} dW_s.
\label{sol}
\end{equation}
The Fr\'echet derivative of $u(t)$ with respect to the initial condition $u_0$ is \citep{Daniele2021}
\begin{equation}
D_{u_0}u(t) = S(t).
\end{equation}
Hence, the first variation process for the SPDE \eqref{eq:spde} satisfies 
\begin{equation}
dY_t = A Y_t dt,\quad Y_0=I_H\quad \Rightarrow \quad Y_t = S(t).
\label{FVP}
\end{equation} 
Note that for each $v\in H$
\begin{equation}
\left\|Y_t v\right\|_H = \left\|S(t) v\right\|_H \leq \left\|S(t)\right\|_{\mathcal{L}(H)} \left\|v\right\|_H
\end{equation}
which is finite for all $t \geq 0$.

Hereafter we leverage this linearity to define the Malliavin covariance operator of $u(t)$ directly on \( H \). The following theorem provides the precise formulation of this result. 

\begin{theorem}
\label{thm:malliavin-covariance-spde}
Consider the linear SPDE \eqref{eq:spde} with the assumptions at the beginning of Section \ref{sec:methodology}. Define the first variation process $Y_t = S(t)$, i.e., the semigroup generated by $A$. The Malliavin covariance operator $\gamma_t: H \to H$ of the solution $u(t)$ at time $t > 0$ is given by
\begin{equation}
\gamma_t = \int_0^t S(s) Q^{1/2}\left(Q^{1/2}\right)^* S(s)^* \, ds,
\label{MCO}
\end{equation}
where $S(s)^*$ is the adjoint of $S(s)$ in $\mathcal{L}(H)$.
The operator $\gamma_t$ is positive, self-adjoint, and trace-class on $H$.

If $A$ generates a strongly continuous group $\{S(t)\}_{t \in \mathbb{R}}$, then $S(t)$ is invertible for all $t\in\mathbb{R}$ and the Malliavin covariance operator can be alternatively expressed as
\[
\gamma_t = Y_t C_t Y_t^*,\qquad  C_t= \int_0^t Y_r^{-1} Q^{1/2}\left(Q^{1/2}\right)^* \left(Y_r^{-1}\right)^* \, dr,
\]
where $Y_r^{-1} = S(-r)$ and the integral $C_t$ is a trace-class operator on $H$.
\end{theorem}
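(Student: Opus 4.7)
The plan is to compute the Malliavin derivative of the mild solution \eqref{sol}, assemble the covariance operator from it, and then use the group structure to factor out \(Y_t\) when \(A\) generates a group. First, I would apply the Malliavin derivative operator \(D\) to the mild solution
\[
u(t) = S(t)u_0 + \int_0^t S(t-s)Q^{1/2}\,dW_s.
\]
Since the first term is deterministic and the integrand is deterministic, the standard commutation of \(D\) with the Hilbert-valued It\^o integral yields
\[
D_s u(t) = S(t-s)Q^{1/2}\,\mathbf{1}_{[0,t]}(s),\qquad s\in[0,T],
\]
as an element of \(\mathcal{L}_{\mathrm{HS}}(U,H)\) for each \(s\). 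The Hilbert--Schmidt hypothesis \eqref{eq:HS-condition} guarantees that \(u(t)\in\mathbb{D}^{1,2}(H)\) and that \(D u(t)\in L^2([0,T];\mathcal{L}_{\mathrm{HS}}(U,H))\).

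Next, I would invoke the definition of the Malliavin covariance operator on \(H\): for \(h,k\in H\),
\[
\langle \gamma_{u(t)} h, k\rangle_H
 = \int_0^T \langle (D_s u(t))^* h, (D_s u(t))^* k\rangle_U\, ds,
\]
which, after substituting the explicit form of \(D_s u(t)\) and using that the adjoint of \(S(t-s)Q^{1/2}\) is \((Q^{1/2})^* S(t-s)^*\), gives
\[
\gamma_{u(t)} = \int_0^t S(t-s)\,Q^{1/2}(Q^{1/2})^* S(t-s)^*\, ds.
\]
The change of variables \(r=t-s\) then produces \eqref{MCO}. Well-definedness of the Bochner integral (in trace norm) follows from \(\|S(r)Q^{1/2}(Q^{1/2})^*S(r)^*\|_{\mathrm{tr}} = \|S(r)Q^{1/2}\|_{\mathrm{HS}}^2\) and \eqref{eq:HS-condition}; in particular \(\gamma_{u(t)}\) is a nonnegative, self-adjoint, trace-class operator on \(H\).

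For the group case, I would use \(S(t-s) = S(t)S(-s) = Y_t Y_s^{-1}\), valid because \(\{S(t)\}_{t\in\mathbb{R}}\) is a group so \(Y_s^{-1} = S(-s)\) exists in \(\mathcal{L}(H)\). Pulling \(Y_t\) and \(Y_t^*\) outside the Bochner integral (justified by the boundedness of these operators on \(H\) and the dominated convergence argument for Bochner integrals) yields
\[
\gamma_{u(t)} = Y_t\!\left(\int_0^t Y_s^{-1}Q^{1/2}(Q^{1/2})^*(Y_s^{-1})^*\, ds\right)\! Y_t^* = Y_t C_t Y_t^*.
\]
Trace-class-ness of \(C_t\) follows from the same identity \(\|Y_s^{-1}Q^{1/2}(Q^{1/2})^*(Y_s^{-1})^*\|_{\mathrm{tr}} = \|S(-s)Q^{1/2}\|_{\mathrm{HS}}^2\) together with the uniform bound \(\|S(-s)\|_{\mathcal{L}(H)} \leq M e^{|\omega|t}\) for \(s\in[0,t]\), giving \(\int_0^t\|S(-s)Q^{1/2}\|_{\mathrm{HS}}^2\,ds\leq M^2 e^{2|\omega|t}t\,\|Q^{1/2}\|_{\mathrm{HS}}^2 < \infty\).

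\textbf{Main obstacle.} The routine calculation is the substitution itself; the subtle point is justifying the commutation of the Malliavin derivative with the infinite-dimensional stochastic integral in the \(\mathcal{L}_{\mathrm{HS}}\)-valued sense, and confirming that the operator-valued Bochner integral defining \(\gamma_{u(t)}\) converges in the trace-class norm rather than merely weakly. I would handle this by approximating \(Q^{1/2}\) by finite-rank operators \(Q_n^{1/2}\), for which each result reduces to a finite sum of one-dimensional Wiener integrals where all identities are classical, and then passing to the limit using \eqref{eq:HS-condition} and completeness of the trace-class ideal.
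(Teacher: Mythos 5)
Your proposal is correct and follows essentially the same route as the paper's proof: compute $D_s u(t)=S(t-s)Q^{1/2}\mathbf{1}_{[0,t]}(s)$ from the deterministic integrand of the mild solution, assemble $\gamma_{u(t)}$ from $D u(t)$ and its adjoint, change variables to obtain \eqref{MCO}, and factor $S(t-s)=Y_tY_s^{-1}$ in the group case. Your added care about trace-norm convergence of the Bochner integral and the finite-rank approximation of $Q^{1/2}$ goes slightly beyond the paper (which simply cites the standard rule for deterministic integrands and asserts the trace-class property); just note the harmless notational slips where $T$ should read $t$ and that the group bound $\|S(-s)\|_{\mathcal{L}(H)}\le M'e^{\omega'|s|}$ requires the standard two-sided estimate for $C_0$-groups rather than the semigroup constants alone.
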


\begin{proof}
We begin by establishing existence of the mild solution to our SPDE \eqref{eq:spde}. Since the equation is driven by space--time coloured noise, the noise operator $Q^{1/2}: U \to H$ is a Hilbert-Schmidt operator. The mild solution is given by
\begin{equation}
u(t) = S(t) u_0 + \int_0^t S(t - s) Q^{1/2} dW_s,
\label{Sint}
\end{equation}
where the stochastic integral is taken in $H$ with respect to the cylindrical Wiener process $W_t$. The well-posedness of this integral requires the integrability condition
\begin{equation}
\int_0^t \left\| S(s) Q^{1/2} \right\|_{\mathrm{HS}}^2 \, ds < \infty,
\end{equation}
which is satisfied under our assumptions. This condition ensures that $u(t) \in L^2(\Omega; H)$ and that the stochastic integral is well-defined.

To derive the Malliavin covariance operator, we first compute the Malliavin derivative $\Mall_r$ (at time $r$) of $u(t)$. This derivative measures the sensitivity of the solution $u(t)$ with respect to perturbations in the noise $W_r$. Since $u_0$ is deterministic, we have
\[
\Mall_r \left[S(t) u_0\right] = 0.
\]
For the stochastic integral term in \eqref{Sint}, the Malliavin derivative 
at time $r \in [0, t]$ is given by the integrand evaluated at the perturbation time (see~\ref{app:a3})
\begin{equation}
\Mall_r u(t) = S(t - r) Q^{1/2} \mathbf{1}_{[0,t]}(r),
\label{e1}
\end{equation}
where $\mathbf{1}_{[0,t]}(r)$ is the indicator function ensuring causality. This result follows from the standard theory of Malliavin calculus for stochastic integrals: if $\Phi(s)$ is a deterministic integrand, then $\Mall_r \left(\int_0^t \Phi(s) dW_s\right) = \Phi(r) \mathbf{1}_{[0,t]}(r)$. In our case we have $\Phi(s) = S(t - s) Q^{1/2}$, yielding the expression \eqref{e1}. Note that for each $r \in [0,t]$, $\Mall_r u(t) \in \mathcal{L}_{\mathrm{HS}}(U, H)$.
Next, define the Malliavin covariance operator 
\begin{equation}
\gamma_t =\int_0^t \Mall_r u(t) \left(\Mall_r u(t)\right)^* \, dr,
\label{MC}
\end{equation}
where $(\Mall_r u(t))^*: H \to U$ denotes the adjoint and $\Mall_r u(t)(\Mall_r u(t))^* \in \mathcal{L}(H)$. Substituting \eqref{e1} into \eqref{MC} we obtain 
\[
\gamma_t = \int_0^t S(t - r) Q^{1/2} \left(S(t - r) Q^{1/2}\right)^* \, dr.
\]
Since $Q^{1/2}: U \to H$ is a Hilbert-Schmidt operator with adjoint $(Q^{1/2})^*: H \to U$, we have
\[
\left(S(t - r) Q^{1/2}\right)^* = \left(Q^{1/2}\right)^* S(t - r)^*.
\]
This gives us
\[
\Mall_r u(t) (\Mall_r u(t))^* = S(t - r) Q^{1/2}\left(Q^{1/2}\right)^{*} S(t - r)^*,
\]
which in turn allows us to write \eqref{MC} as
\[
\gamma_t = \int_0^t S(t - r) Q^{1/2}\left(Q^{1/2}\right)^* S(t - r)^* \, dr.
\]
To obtain the final form of the Malliavin covariance operator, 
we perform the change of variables $s = t - r$. This gives
\begin{align}
\gamma_t = \int_0^t S(s) Q^{1/2}\left(Q^{1/2}\right)^* S(s)^* \, ds.
 \label{MCfin}
\end{align}
The covariance operator $\gamma_t$ is positive, self-adjoint, and trace-class on $H$. The trace-class property follows from the fact that $Q^{1/2}$ is Hilbert-Schmidt, $S(s)$ and $S(s)^*$ are bounded operators, and the integral is over a finite interval $[0, t]$. The integrability condition \eqref{eq:HS-condition} ensures these properties are preserved.

For the alternative representation of $\gamma_t$ when $A$ generates a $C_0$-group, we observe that if $S(t)$ forms a strongly continuous group, then $S(t)$ is invertible for all $t \in \mathbb{R}$ with $S(t)^{-1} = S(-t)$. Setting $Y_t = S(t)$ and $Y_r^{-1} = S(-r)$, we can factor the Malliavin covariance operator as
\begin{equation}
\gamma_t = \int_0^t S(t - r) Q^{1/2}\left(Q^{1/2}\right)^* S(t - r)^* \, dr = Y_t C_tY_t^*,
\label{MCgroup}
\end{equation}
where 
\begin{equation}
 C_t= \int_0^t Y_r^{-1} Q^{1/2}\left(Q^{1/2}\right)^* \left(Y_r^{-1}\right)^* \, dr.
\end{equation}
The integral $C_t$ in \eqref{MCgroup} represents the accumulated noise covariance transformed by the inverse semigroup, while the outer factors $Y_t$ and $Y_t^*$ propagate this covariance to time $t$.

\end{proof}

To illustrate the connection between the infinite-dimensional setting discussed in Theorem \ref{thm:malliavin-covariance-spde} and its finite-dimensional counterpart, let $\{ h_i \}_{i=1}^m \subset H$ be a set of linearly independent vectors, and define 
\[
F = \left( \langle u(t), h_1 \rangle_H, \dots, \langle u(t), h_m \rangle_H \right) \in \mathbb{R}^m.
\]
The Malliavin covariance matrix of $F$ can be computed as
\[
\gamma_F = \left( \langle h_i, \gamma_t h_j \rangle_H \right)_{i,j=1}^m,
\]
which provides a finite-dimensional representation of the infinite-dimensional operator $\gamma_t$.
Note that the Malliavin covariance operator $\gamma_t$ encapsulates both the spatial correlation induced by $Q^{1/2}$ and the temporal evolution governed by the semigroup $S(t)$, providing a characterisation of the stochastic variability of the solution in the infinite-dimensional Hilbert space $H$.

\subsection{The Cameron--Martin Space and Logarithmic Derivatives}
\label{sec:CM-and-logderiv}

Before proceeding to the Bismut formula, we must address a fundamental issue: in infinite dimensions, there is no Lebesgue measure on $H$, and hence the notion of a ``density'' $p_{u(t)}(u)$ with respect to which one computes a gradient requires careful interpretation. The appropriate framework is that of logarithmic derivatives (also called Fomin derivatives) of measures.

Let $\mu_t$ denote the law of $u(t)$ on $H$. Since $u(t)$ is Gaussian with mean $m_t := S(t) u_0$ and covariance operator $\gamma_t$, the measure $\mu_t$ is a Gaussian measure $\mathcal{N}(m_t, \gamma_t)$ on $H$. The Cameron--Martin space associated with $\mu_t$ is
\begin{equation}
\CMspace := \Ran(\gamma_t^{1/2}),
\label{eq:CM-space-def}
\end{equation}
equipped with the Cameron--Martin inner product \eqref{eq:CM-inner-product} (with $\gamma = \gamma_t$). Note that $\gamma_t^{-1/2}: \CMspace \to H$ is \emph{unbounded} when viewed as an operator on $(H, \|\cdot\|_H)$, but when $\CMspace$ is equipped with the Cameron--Martin norm $\|h\|_{\CMspace} := \|\gamma_t^{-1/2} h\|_H$, the map $\gamma_t^{-1/2}: (\CMspace, \|\cdot\|_{\CMspace}) \to (H, \|\cdot\|_H)$ is an isometry.

\begin{definition}[Logarithmic derivative]
\label{def:log-derivative}
Let $\mu_t$ be the law of $u(t)$ on $H$. For $h \in \CMspace$, the logarithmic derivative of $\mu_t$ along $h$ is the function $\beta_h: H \to \mathbb{R}$ defined $\mu_t$-almost everywhere by the integration-by-parts formula
\begin{equation}
\int_H \langle \Frechet \phi(u), h \rangle_H \, d\mu_t(u) = -\int_H \phi(u) \beta_h(u) \, d\mu_t(u)
\label{eq:log-deriv-def}
\end{equation}
for all $\phi \in C_b^1(H)$ (bounded continuously Fr\'echet differentiable functions).
\end{definition}

The logarithmic derivative $\beta_h(u)$ serves as the infinite-dimensional analogue of the directional derivative $\langle \nabla \log p(u), h \rangle$ in finite dimensions. The key insight is that $\beta_h$ is well-defined only for directions $h \in \CMspace$; for $h \notin \CMspace$, the measure $\mu_t$ is not quasi-invariant under translation by $h$, and no logarithmic derivative exists. Importantly, this restriction is not a limitation but rather reflects the intrinsic geometry of Gaussian measures in infinite dimensions. Translations along directions outside $\CMspace$ move the measure to a mutually singular measure, so there is no meaningful notion of a ``score'' in those directions. This is the precise infinite-dimensional analogue of the fact that in finite dimensions, the score $\nabla \log p(x)$ can only be computed at points where $p(x) > 0$. For practitioners, this means when discretising to $n$ modes, the projected Cameron--Martin space $\Pi_n \CMspace$ fills out $\mathbb{R}^n$ for any finite $n$, and the restriction becomes invisible. Our infinite-dimensional formulae give the continuum limit that these discretised scores converge to.

\subsection{Bismut Formula}
\label{sec:Bismut}
In this section, we extend the Bismut formula originally 
developed for finite-dimensional stochastic differential equations~\citep{alma990005308880204808, elworthy1994, Elworthy_1982, Nualart_Nualart_2018} to the infinite-dimensional setting of SPDEs of the form~\eqref{eq:spde}. This generalisation is then used to express the logarithmic derivative of the transition measure $\mu_t$, i.e., the infinite-dimensional analogue of the score function.

For directions $h \in \Ran(\gamma_t)$ where we can construct an explicit covering vector field, the Bismut formula provides a stochastic representation for the logarithmic derivative. Define $G(u) := \mathbb{E}[\delta(v_h) \mid \sigma(u(t))]$, which is a $\sigma(u(t))$-measurable random variable. The Bismut formula states that
\begin{equation}
\beta_h(u(t)) = -G(u(t)) \quad \mu_t\text{-a.e.},
\label{scoreBismut}
\end{equation}
where $\delta(v_h)$ is the Skorokhod integral of the covering vector field $v_h$, defined as the adjoint of the Malliavin derivative operator $\Mall_r$ (see~\ref{app:a5}). See \citep{Nualart_Nualart_2018} for an exhaustive treatment. Unlike the It\^o integral, the Skorokhod integral extends to non-adapted processes, capturing the effects of stochastic perturbations in the infinite-dimensional noise structure. 
The direction $h \in \Ran(\gamma_t)$ corresponds to an admissible perturbation along which the explicit covering construction applies. By density of $\Ran(\gamma_t)$ in $\CMspace = \Ran(\gamma_t^{1/2})$ (in the Cameron--Martin norm), the logarithmic derivative extends uniquely to all $h \in \CMspace$ as an element of $L^2(\mu_t)$; see Theorem~\ref{thm:score}.

Our next step is to define the covering vector field $v_h(r)$ appearing in the Skorokhod integral $\delta(v_h)$ within the SPDE framework. This definition accounts for the spatially correlated noise introduced by $Q^{1/2}$, ensuring consistency with the SPDE and its mild solution in $H$.

\begin{definition}[Covering Vector Field]
\label{def:covering-field}
For each $h \in \Ran(\gamma_t)$, define $\tilde{h} := \gamma_t^{-1} h \in H$ (noting that $\gamma_t^{-1}: \Ran(\gamma_t) \to H$ is densely defined but unbounded). The covering vector field $v_h: [0,t] \to U$ is defined by
\begin{equation}
v_h(r) = \left(Q^{1/2}\right)^{*} S(t-r)^{*}\,\tilde{h},\qquad r\in[0,t].
\label{defC}
\end{equation}
\end{definition}

Since $\Ran(\gamma_t)$ is dense in $\CMspace = \Ran(\gamma_t^{1/2})$ with respect to the Cameron--Martin norm, the covering construction and the resulting score formula extend by continuity to all $h \in \CMspace$. For practical computations (e.g., finite-rank truncations where $\gamma_t^{(n)}$ has finite rank), one has $\Ran(\gamma_t^{(n)}) = \Ran((\gamma_t^{(n)})^{1/2})$, so this distinction disappears in approximations.

This definition ensures that $v_h$ is a $U$-valued process reflecting the noise's covariance structure, with $Q^{1/2}$ modulating spatial dependence and $\gamma_t^{-1} h$ adjusting $h$ per the solution's stochastic dependence. In the following theorem we prove that \eqref{defC} satisfies the covering property
\begin{equation}
\langle \Mall u(t), v_h \rangle_{H_W} = h,
\label{CP}
\end{equation}
where $H_W = L^2([0,t]; U)$ is the space of square-integrable $U$-valued functions with inner product \eqref{eq:HW-inner-product}. Here, $\langle \Mall u(t), v_h \rangle_{H_W}$ denotes the $H$-valued Bochner integral
\begin{equation}
\langle \Mall u(t), v_h \rangle_{H_W} := \int_0^t \Mall_r u(t) \, v_h(r) \, dr,
\label{pairing-def}
\end{equation}
where $\Mall_r u(t) \in \mathcal{L}_{\mathrm{HS}}(U, H)$ acts on $v_h(r) \in U$ to produce an element of $H$.

\begin{theorem}[Covering property]
\label{thm:covering-property-spde}
Assume \ref{Afirst}--\ref{Alast} and \eqref{eq:HS-condition}. 
Let $u(t)$ be the mild solution of \eqref{eq:spde}, let $\gamma_t$ be given by \eqref{MCO}, and let $H_W=L^2([0,t];U)$. 
For each $h\in \Ran(\gamma_t)$, define $v_h$ by \eqref{defC} with $\tilde{h} = \gamma_t^{-1} h$. 
Then the covering property holds:
\[
\langle \Mall u(t), v_h \rangle_{H_W} = h.
\]
\end{theorem}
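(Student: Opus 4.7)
The plan is to reduce the covering identity to a direct computation built on the explicit form of $D_r u(t)$ established in Theorem \ref{thm:malliavin-covariance-spde}, together with the definition of $\gamma_{u(t)}$ and the defining property of the Moore--Penrose pseudoinverse. The key point is that both sides of the claimed identity are elements of $H$: the right-hand side $\Pi_{\overline{\operatorname{Ran}\gamma_{u(t)}}}h$ is such by definition, while the left-hand side, once the $H_W$-pairing of an operator-valued Malliavin derivative with a $U$-valued process is correctly interpreted, equals the $H$-valued Bochner integral $\int_0^t D_r u(t)\, v_h(r)\,dr$, where $D_r u(t)\in\mathcal{L}(U,H)$ acts on $v_h(r)\in U$.

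First I would check that $v_h\in H_W = L^2([0,t];U)$ for every $h\in\operatorname{Ran}\gamma_{u(t)}$, by noting that $\gamma_{u(t)}^{\dagger}h$ is a well-defined element of $H$ on this range and that
\begin{equation*}
\|v_h\|_{H_W}^2 = \int_0^t \|(Q^{1/2})^{*}S(t-r)^{*}\gamma_{u(t)}^{\dagger}h\|_U^2\,dr = \langle \gamma_{u(t)}^{\dagger}h,\,\gamma_{u(t)}\gamma_{u(t)}^{\dagger}h\rangle_H < \infty,
\end{equation*}
where I have used the definition of $\gamma_{u(t)}$ after swapping the $U$-norm into an $H$-pairing via $Q^{1/2}$. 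Second, substituting \eqref{e1} and \eqref{defC} I would factor the fixed vector $\gamma_{u(t)}^{\dagger}h$ out of the Bochner integral and apply the change of variables $s=t-r$ to obtain
\begin{equation*}
\int_0^t D_r u(t)\,v_h(r)\,dr = \left(\int_0^t S(s)\,Q^{1/2}\left(Q^{1/2}\right)^{*}\,S(s)^{*}\,ds\right)\gamma_{u(t)}^{\dagger}h = \gamma_{u(t)}\,\gamma_{u(t)}^{\dagger}h,
\end{equation*}
recognising the operator-valued integral as $\gamma_{u(t)}$ via \eqref{MCO}. Third, I would invoke the defining identity $\gamma_{u(t)}\gamma_{u(t)}^{\dagger}=\Pi_{\overline{\operatorname{Ran}\gamma_{u(t)}}}$ recalled in the footnote preceding \eqref{defC} to conclude.

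The main obstacle I anticipate is not algebraic but functional--analytic: one must justify that the action of the operator-valued Bochner integral on the fixed vector $\gamma_{u(t)}^{\dagger}h\in H$ commutes with integration, and that the $H_W$-pairing notation $\langle Du(t), v_h\rangle_{H_W}$ really coincides with the $H$-valued integral above. Both points follow from the Hilbert--Schmidt integrability condition \eqref{eq:HS-condition}, which already underpins the well-posedness of the mild solution and the trace-class character of $\gamma_{u(t)}$; once this technical interpretation is in place, the identity is essentially a one-line consequence of Theorem \ref{thm:malliavin-covariance-spde} and the defining property of $\gamma_{u(t)}^{\dagger}$.
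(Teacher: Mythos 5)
Your proposal is correct and follows essentially the same route as the paper's own proof: interpret $\langle Du(t),v_h\rangle_{H_W}$ as the $H$-valued Bochner integral $\int_0^t D_r u(t)\,v_h(r)\,dr$, substitute the explicit derivative $D_r u(t)=S(t-r)Q^{1/2}\mathbf{1}_{[0,t]}(r)$, change variables $s=t-r$ to recognise $\gamma_{u(t)}$ from \eqref{MCO}, and conclude with $\gamma_{u(t)}\gamma_{u(t)}^{\dagger}=\Pi_{\overline{\operatorname{Ran}\gamma_{u(t)}}}$. Your preliminary verification that $v_h\in H_W$ (via $\|v_h\|_{H_W}^2=\langle \gamma_{u(t)}^{\dagger}h,\gamma_{u(t)}\gamma_{u(t)}^{\dagger}h\rangle_H$) is a correct extra step that the paper defers to Theorem~\ref{thm:SkorokhodIto}, so it is a harmless addition rather than a divergence.
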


\begin{proof}
Consider the space $H_W = L^2([0,t]; U)$, where $U$ is the Hilbert space representing the input noise. In the context of the Malliavin derivative $\Mall_r u(t)$, which in this linear additive setting is a deterministic function of $r$ with values in 
$\mathcal{L}_{\mathrm{HS}}(U, H)$, and the vector field $v_h \in H_W$, the pairing $\left\langle \Mall u(t), v_h \right\rangle_{H_W}$ is defined as in \eqref{pairing-def}.

By \eqref{e1}, $\Mall_r u(t)=S(t-r)Q^{1/2}\mathbf{1}_{[0,t]}(r)$.
Let $h \in \Ran(\gamma_t)$ and write $\tilde{h} = \gamma_t^{-1} h$. Substituting the covering vector field \eqref{defC} into the 
pairing \eqref{pairing-def} we obtain 
\begin{equation}
\langle \Mall u(t), v_h \rangle_{H_W} = 
\int_0^t S(t - r) Q^{1/2} \left( \left(Q^{1/2}\right)^{*} S(t - r)^* \tilde{h} \right) \, dr.
\label{t1}
\end{equation}
Here, $S(t - r) Q^{1/2}$ maps $U$ to $H$, and $\left(Q^{1/2}\right)^{*} S(t - r)^* \tilde{h}$ is a vector in $U$ ensuring 
the composition is well‑defined.
To evaluate \eqref{t1}, we perform the change of variables $s = t - r$. This yields
\begin{align}
\int_0^t S(t - r) Q^{1/2}(Q^{1/2})^* S(t - r)^* \tilde{h} \, dr 
= \int_0^t S(s) Q^{1/2}(Q^{1/2})^* S(s)^* \tilde{h} \, ds.
\end{align}
From Theorem~\ref{thm:malliavin-covariance-spde}, we know that the Malliavin covariance operator can be expressed as in \eqref{MCO}. Thus, the pairing simplifies to
\begin{align}
\langle \Mall u(t), v_h \rangle_{H_W} 
&= \int_0^t S(s) Q^{1/2}(Q^{1/2})^* S(s)^* \tilde{h} \, ds \nonumber\\
&= \left( \int_0^t S(s) Q^{1/2}(Q^{1/2})^* S(s)^* \, ds \right) \tilde{h} \nonumber\\
&= \gamma_t \tilde{h} = \gamma_t \gamma_t^{-1} h = h.
\end{align}

\end{proof}

\begin{theorem}[Solution structure and minimal norm]
\label{thm:uniqueness-spde}
Under the assumptions of Theorem~\ref{thm:covering-property-spde}, define the subspace $\mathcal V:=\{(\Mall u(t))^{*}z:z\in H\}\subset H_W$ and fix $h\in\Ran(\gamma_t)$. Then:
\begin{enumerate}
\item There is a unique $v\in\mathcal V$ with $\langle \Mall u(t),v\rangle_{H_W}=h$, namely $v_h$ as defined in \eqref{defC}.
\item Among all solutions $v\in H_W$ of the covering equation $\langle \Mall u(t), v \rangle_{H_W} = h$, the covering vector field $v_h$ is the unique solution of minimal $H_W$-norm.
\item Every solution in $H_W$ is of the form $v_h+w$ with $w\in\mathcal V^\perp$, and
\[
\left\|v_h+w\right\|_{H_W}^2=\left\|v_h\right\|_{H_W}^2+\left\|w\right\|_{H_W}^2.
\]
\end{enumerate}
\end{theorem}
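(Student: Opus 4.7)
The plan is to recognise the covering equation as a linear operator equation in Hilbert space and then apply the standard orthogonal decomposition theorem. The pivotal observation is that the Malliavin derivative $Du(t)$ may be viewed as a bounded (in fact Hilbert--Schmidt) operator $H_W\to H$ via $v\mapsto\int_0^t D_r u(t)\,v(r)\,dr$, so that the pairing in Theorem~\ref{thm:covering-property-spde} is simply $\langle Du(t),v\rangle_{H_W}=Du(t)v$. From \eqref{e1} and the definition \eqref{MC} one reads off the operator identity $Du(t)(Du(t))^{*}=\gamma_{u(t)}$ on $H$; in particular $\mathcal V=\operatorname{Ran}(Du(t))^{*}$.

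For part~(1), I would substitute the ansatz $v=(Du(t))^{*}z$ into the covering equation. Using the identity above, this reduces to the equation in $H$,
\[
\gamma_{u(t)}\,z=\Pi_{\overline{\operatorname{Ran}\gamma_{u(t)}}}\,h,
\]
whose canonical solution is $z=\gamma_{u(t)}^{\dagger}\Pi_{\overline{\operatorname{Ran}\gamma_{u(t)}}}\,h$ by the defining property of the Moore--Penrose pseudoinverse; composing with $(Du(t))^{*}$ recovers the stated formula for $v_h$. For uniqueness inside $\mathcal V$, if $(Du(t))^{*}z_1$ and $(Du(t))^{*}z_2$ both solve the equation, then $z_1-z_2\in\ker\gamma_{u(t)}$. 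The polarisation identity $\|(Du(t))^{*}z\|_{H_W}^{2}=\langle\gamma_{u(t)}z,z\rangle_H$ gives $\ker\gamma_{u(t)}=\ker(Du(t))^{*}$, which is exactly the parenthetical hint in the statement, so the two candidates coincide.

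For parts~(2) and~(3) I would invoke the standard Hilbert-space decomposition
\[
H_W=\overline{\operatorname{Ran}(Du(t))^{*}}\oplus\ker Du(t)=\overline{\mathcal V}\oplus\mathcal V^{\perp}.
\]
Any two $H_W$-solutions of the covering equation differ by an element of $\ker Du(t)=\mathcal V^{\perp}$, and since $v_h\in\mathcal V$ every solution decomposes uniquely as $v=v_h+w$ with $w\in\mathcal V^{\perp}$. Pythagoras then gives $\|v_h+w\|_{H_W}^{2}=\|v_h\|_{H_W}^{2}+\|w\|_{H_W}^{2}$, from which both the minimal-norm claim and the uniqueness of the minimiser follow immediately.

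The main obstacle is purely technical and concerns the Moore--Penrose pseudoinverse in infinite dimensions: when $\operatorname{Ran}\gamma_{u(t)}$ is not closed, $\gamma_{u(t)}^{\dagger}$ is only densely defined, with natural domain $\operatorname{Ran}\gamma_{u(t)}\oplus\ker\gamma_{u(t)}$. To make $v_h$ a genuine element of $H_W$ one must therefore either tighten the hypothesis to $h\in\operatorname{Ran}\gamma_{u(t)}$ or read $\gamma_{u(t)}^{\dagger}\Pi_{\overline{\operatorname{Ran}\gamma_{u(t)}}}h$ through the closed-range factorisation of $(Du(t))^{*}$. Once this is cleared up, the proof is a clean application of Hilbert-space orthogonality.
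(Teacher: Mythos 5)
Your proof is correct and follows essentially the same route as the paper's: reduce the covering equation on $\mathcal V$ to $\gamma_{u(t)}z=\Pi_{\overline{\operatorname{Ran}\gamma_{u(t)}}}h$ via $Du(t)(Du(t))^{*}=\gamma_{u(t)}$, use $\ker\gamma_{u(t)}=\ker(Du(t))^{*}$ for uniqueness in $\mathcal V$, and settle parts (2)--(3) through the orthogonal splitting $H_W=\overline{\mathcal V}\oplus\mathcal V^{\perp}$ (the paper phrases this via the operator $Tv=\langle Du(t),v\rangle_{H_W}$ and $\ker T=\mathcal V^{\perp}$) together with Pythagoras. Your closing caveat about the domain of $\gamma_{u(t)}^{\dagger}$ when $h\in\overline{\operatorname{Ran}\gamma_{u(t)}}\setminus\operatorname{Ran}\gamma_{u(t)}$ is a fair technical point that the paper's proof also passes over silently.
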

\begin{proof}
(1) Let $v\in\mathcal V$ solve the covering equation. Then $v=(\Mall u(t))^{*}z$ for some $z\in H$, and
\[
\langle \Mall u(t),v\rangle_{H_W}
=\int_0^t \Mall_r u(t)\,(\Mall u(t))^{*}z\,dr
=\gamma_t z
= h .
\]
For $h \in \Ran(\gamma_t)$, there exists unique $\tilde{h} = \gamma_t^{-1} h \in H$ with $\gamma_t \tilde{h} = h$. Then $\gamma_t z = h$ implies $z - \tilde{h} \in \ker(\gamma_t)$. Since $\gamma_t = \Mall u(t)(\Mall u(t))^*$ is positive self-adjoint, $\ker\gamma_t = \ker(\Mall u(t))^*$. Thus
\[
v-(\Mall u(t))^{*}\tilde{h}
=(\Mall u(t))^{*}\!\bigl(z-\tilde{h}\bigr) = 0,
\]
as $(\Mall u(t))^*$ annihilates $\ker\gamma_t$. Therefore $v = v_h$, proving uniqueness in $\mathcal V$.

(2) Let $T:H_W\to H$ be $Tv:=\langle \Mall u(t),v\rangle_{H_W}$ (Bochner integral). Then $\ker T=\mathcal V^\perp$ because for any $z\in H$,
\[
\langle z, Tv\rangle_H=\int_0^t\!\!\langle \Mall_r u(t)^*z,\,v(r)\rangle_U\,dr
=\langle (\Mall u(t))^{*}z,\,v\rangle_{H_W}.
\]
Thus every solution $v$ can be written uniquely as $v=v_\parallel+v_\perp$ with $v_\parallel\in\mathcal V$, $v_\perp\in\mathcal V^\perp$ and $Tv_\perp=0$, $Tv_\parallel=h$. Among all such decompositions,
\[
\left\|v\right\|_{H_W}^2=\left\|v_\parallel\right\|_{H_W}^2+\left\|v_\perp\right\|_{H_W}^2
\]
is minimised iff $v_\perp=0$. By part (1), $v_\parallel = v_h$ is unique, so the minimal-norm solution is uniquely $v_h$.

(3) If $v$ is any solution, then $T(v-v_h)=0$, so $v-v_h\in\ker T=\mathcal V^\perp$. Thus $v = v_h + w$ for some $w\in\mathcal V^\perp$. The norm identity follows from orthogonality $\mathcal V \perp \mathcal V^\perp$.
\end{proof}

\subsection{Skorokhod Integral}

In this section we show that, for the linear SPDE \eqref{eq:spde} with state-independent colouring \(Q^{1/2}\) (Hilbert–Schmidt), the Skorokhod integral simplifies to the more tractable It\^o integral. To this end, let us first recall the Bismut formula \eqref{scoreBismut}, which for $h \in \Ran(\gamma_t)$ states that
\[
\beta_h(u(t)) = -\mathbb{E} \left[ \delta(v_h) \mid \sigma(u(t)) \right] \quad \mu_t\text{-a.e.},
\]
expressing the logarithmic derivative in terms of the conditional expectation of the Skorokhod integral $\delta(v_h)$. The Skorokhod integral $\delta: \Dom(\delta) \subset L^2(\Omega; H_W) \to L^2(\Omega)$ is defined as the adjoint of the Malliavin derivative $\Mall$ via the duality relation
\begin{equation}
\mathbb{E}[\langle \Mall F, v \rangle_{H_W}] = \mathbb{E}[F \delta(v)]
\label{eq:Skorokhod-duality}
\end{equation}
for all $F \in \mathbb{D}^{1,2}$ and $v \in \Dom(\delta)$; see~\ref{app:a5} for details. For deterministic or adapted integrands $v_h \in H_W$ satisfying $\mathbb{E}[\int_0^t \|v_h(r)\|_U^2 \, dr] < \infty$, the Skorokhod integral coincides with the It\^o integral:
\begin{equation}
\delta(v_h) = \int_0^t \langle v_h(r), dW_r \rangle_U.
\label{Skorokhod}
\end{equation}

Our goal now is to show that, due to the linearity of the drift term $A u(t)$ in the SPDE \eqref{eq:spde} and the state-independence of the noise $Q^{1/2} dW_t$, the covering vector field $v_h(r)$ is adapted to the filtration $\{\mathcal{F}_r\}_{r \geq 0}$. In fact, $v_h(r)$ is \emph{deterministic}, which implies adaptedness trivially. This simplifies the Skorokhod integral to an It\^o integral with respect to the cylindrical Wiener process $W_t$
\begin{equation}
\delta(v_h) = \int_0^t \langle v_h(r), dW_r \rangle_U.
\label{Ito}
\end{equation}
A substitution of \eqref{defC} into \eqref{Ito} yields
\begin{equation}
\delta(v_h) = \int_0^t \left<\left (Q^{1/2}\right)^* S(t - r)^* \tilde{h}, dW_r \right>_U,
\label{Ito1}
\end{equation}
where $\tilde{h} = \gamma_t^{-1} h \in H$ for $h \in \Ran(\gamma_t)$.

In the next theorem we show that the covering vector field $v_h(r)$ associated with the linear SPDE \eqref{eq:spde} is indeed deterministic and adapted to the filtration $\{\mathcal{F}_r\}_{r \geq 0}$. Consequently, the Skorokhod integral~\eqref{Skorokhod} coincides with the It\^o integral~\eqref{Ito1}.

\begin{theorem}
\label{thm:SkorokhodIto}
The covering vector field \eqref{defC} for the linear SPDE \eqref{eq:spde} is a deterministic, $U$-valued process that is adapted to the filtration $\{\mathcal{F}_r\}_{r \geq 0}$, thereby satisfying the necessary conditions for It\^o integration in the infinite-dimensional setting. Consequently, the Skorokhod integral~\eqref{Skorokhod} coincides with the It\^o integral
\begin{equation}
\delta(v_h) = \int_0^t \langle v_h(r), dW_r \rangle_U \quad \text{and} \quad \langle \Mall u(t), v_h \rangle_{H_W} = \int_0^t \Mall_r u(t) v_h(r) \, dr.
\end{equation}
Moreover, $v_h \in \Dom(\delta)$ with $\mathbb{E}[\|\delta(v_h)\|^2] = \|v_h\|_{H_W}^2 < \infty$.
\end{theorem}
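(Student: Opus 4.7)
The plan is to show the claim in three steps: (i) each factor entering the definition \eqref{defC} is non-random, so $v_h$ is a deterministic element of $L^2([0,T];U)$; (ii) deterministic integrands are trivially adapted to $\{\mathcal{F}_r\}$; (iii) invoke the standard identification, on the common domain, between the Skorokhod integral and the It\^o integral for adapted $L^2$-integrands. The pairing identity is then just the specialisation of the $H_W$-pairing defined in \eqref{pairing} of Theorem~\ref{thm:covering-property-spde} to the deterministic, operator-valued Malliavin derivative \eqref{e1}.

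First I would unpack the four factors in $v_h(r)=(Q^{1/2})^{*}S(T-r)^{*}\gamma_{u(T)}^{\dagger}h$. The operator $(Q^{1/2})^{*}$ is a fixed Hilbert--Schmidt operator, $S(T-r)^{*}\in\mathcal L(H)$ for each $r\in[0,T]$ is deterministic, and $h\in H$ is fixed. By Theorem~\ref{thm:malliavin-covariance-spde}, $\gamma_{u(T)}$ is the Bochner integral of deterministic operators and hence is itself a deterministic, positive, self-adjoint, trace-class operator on $H$; consequently its Moore--Penrose pseudoinverse $\gamma_{u(T)}^{\dagger}$ is a deterministic (generally unbounded) operator with dense domain containing $\operatorname{Ran}\gamma_{u(T)}$. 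So $r\mapsto v_h(r)$ is a deterministic $U$-valued map, and it is strongly measurable by strong continuity of $r\mapsto S(T-r)^{*}$.

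Next I would verify that $v_h\in L^{2}([0,T];U)$ so that it lies in the domain of both the It\^o and Skorokhod integrals. A direct computation, using $\|(Q^{1/2})^{*}S(T-r)^{*}g\|_{U}^{2}=\langle g,\,S(T-r)Q^{1/2}(Q^{1/2})^{*}S(T-r)^{*}g\rangle_{H}$ with $g=\gamma_{u(T)}^{\dagger}h$, followed by the substitution $s=T-r$ and the representation \eqref{MCO}, gives
\[
\int_0^{T}\|v_h(r)\|_{U}^{2}\,dr
=\bigl\langle \gamma_{u(T)}^{\dagger}h,\;\gamma_{u(T)}\,\gamma_{u(T)}^{\dagger}h\bigr\rangle_{H}
=\bigl\langle \gamma_{u(T)}^{\dagger}h,\;\Pi_{\overline{\operatorname{Ran}\gamma_{u(T)}}}\,h\bigr\rangle_{H},
\]
which is finite for $h\in\operatorname{Ran}\gamma_{u(T)}$ (and, by approximation in the graph norm of $\gamma_{u(T)}^{\dagger}$, for $h\in\operatorname{Ran}\gamma_{u(T)}^{1/2}$). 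Since $v_h$ is deterministic, it is adapted to $\{\mathcal{F}_r\}_{r\geq 0}$, and also belongs to $\mathbb{D}^{1,2}(H_W)$ with $Dv_h=0$.

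Finally I would appeal to the classical fact (see, e.g., Nualart) that on the intersection of their domains the Skorokhod operator $\delta$ extends the It\^o integral: for any $\{\mathcal{F}_r\}$-adapted process $v\in L^{2}([0,T]\times\Omega;U)$ one has $v\in\operatorname{Dom}\delta$ and $\delta(v)=\int_{0}^{T}\langle v(r),dW_r\rangle_{U}$. Applying this to the deterministic $v_h$ yields \eqref{Ito1}. The second identity is immediate: since $D_ru(T)=S(T-r)Q^{1/2}\mathbf 1_{[0,T]}(r)$ is a deterministic $\mathcal L_{\mathrm{HS}}(U,H)$-valued map, the pairing \eqref{pairing} reduces to the Bochner integral $\int_{0}^{T}D_ru(T)\,v_h(r)\,dr$. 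The only subtlety worth flagging is the unboundedness of $\gamma_{u(T)}^{\dagger}$, which is handled by restricting $h$ to the natural domain $\operatorname{Ran}\gamma_{u(T)}^{1/2}$ (consistent with the hypothesis preceding \eqref{eq:score-closed}); the rest is bookkeeping.
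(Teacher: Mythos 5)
Your proposal is correct and follows essentially the same route as the paper: all factors in \eqref{defC} are non-random, hence $v_h$ is deterministic and trivially $\{\mathcal{F}_r\}$-adapted; it lies in $L^2([0,T];U)$; and the standard fact that the Skorokhod operator extends the It\^o integral on adapted square-integrable integrands then gives $\delta(v_h)=\int_0^T\langle v_h(r),dW_r\rangle_U$, with the pairing identity reducing to a Bochner integral because $D_ru(T)$ is deterministic. The one substantive difference is the integrability step: the paper bounds $\|v_h(r)\|_U$ pointwise via $\|Q^{1/2}\|_{\mathcal L_{\mathrm{HS}}(U,H)}\,M e^{\omega(T-r)}\,\|\gamma_{u(T)}^{\dagger}h\|_H$, implicitly treating $\gamma_{u(T)}^{\dagger}$ as a bounded operator, whereas you compute the $L^2$ norm exactly as $\int_0^T\|v_h(r)\|_U^2\,dr=\langle\gamma_{u(T)}^{\dagger}h,\;\gamma_{u(T)}\gamma_{u(T)}^{\dagger}h\rangle_H=\langle\gamma_{u(T)}^{\dagger}h,\;\Pi_{\overline{\operatorname{Ran}\gamma_{u(T)}}}h\rangle_H$ using \eqref{MCO}; this is sharper and remains valid for $h\in\operatorname{Ran}\gamma_{u(T)}$ even when the pseudoinverse is unbounded, which is a genuine improvement in rigour over the paper's estimate. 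The only loosely argued point on your side is the extension to $h\in\operatorname{Ran}\!\left(\gamma_{u(T)}^{1/2}\right)$ ``by approximation in the graph norm'': such $h$ need not lie in $\operatorname{Dom}\gamma_{u(T)}^{\dagger}$, so $v_h$ itself is not defined there without further argument (the natural finite quantity is $\|\gamma_{u(T)}^{-1/2}h\|_H$); since the theorem as stated only concerns the field \eqref{defC} on its natural domain, this caveat does not affect the claim, but it should be stated as a remark rather than folded into the proof. You also omit the paper's final verification that the coordinate expansion $\sum_n\int_0^T\langle v_h(r),f_n\rangle_U\,dB^n_r$ converges, though this is an immediate consequence of the same $L^2$ membership you established.
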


\begin{proof}
Consider the covering vector field $v_h(r)$ defined in \eqref{defC}, where $\tilde{h} = \gamma_t^{-1} h \in H$ for $h \in \Ran(\gamma_t)$.
To evaluate the properties of $v_h(r)$ in the space $U$, we first notice that $\tilde{h} \in H$ is deterministic, and $S(t - r)^*: H \to H$ can be bounded as
\begin{align}
\left\|S(t - r)^* \tilde{h}\right\|_H &\leq \left\|S(t - r)^*\right\|_{\mathcal{L}(H)} \left\|\tilde{h}\right\|_H \nonumber\\
&\leq M e^{\omega (t - r)} \left\|\tilde{h}\right\|_H.
\end{align}
Since $\left(Q^{1/2}\right)^*: H \to U$ is bounded (adjoint of 
a Hilbert-Schmidt operator), with 
\begin{equation}
\left\|\left(Q^{1/2}\right)^*\right\|_{\mathcal{L}(H,U)} \leq \left\|Q^{1/2}\right\|_{\mathcal{L}_{\mathrm{HS}}(U,H)},\nonumber
\end{equation}
 we have
\begin{align}
\left\|v_h(r)\right\|_U &= \left\|\left(Q^{1/2}\right)^* S(t - r)^* \tilde{h}\right\|_U \nonumber\\
&\leq \left\|\left(Q^{1/2}\right)^*\right\|_{\mathcal{L}(H,U)} \left\|S(t - r)^* \tilde{h}\right\|_H \nonumber\\
&\leq \left\|Q^{1/2}\right\|_{\mathcal{L}_{\mathrm{HS}}(U,H)} M e^{\omega (t - r)} \left\|\tilde{h}\right\|_H.
\end{align}
This norm is a deterministic function of $r$, finite for all $r \in [0,t]$, and continuous due to the strong continuity of $S(s)^*$. Since $\left(Q^{1/2}\right)^*$, $S(t - r)^*$, and $\tilde{h}\in H$ are all deterministic, we have that $v_h(r)$ is deterministic.
A deterministic process is trivially adapted to any filtration, including $\{\mathcal{F}_r\}_{r \geq 0}$, because $v_h(r)$ is measurable with respect to the trivial $\sigma$-algebra $\{\emptyset,\Omega\}$, which is contained in $\mathcal{F}_r$ for all $r \geq 0$. Thus, $v_h(r)$ is both deterministic and $\{\mathcal{F}_r\}$-adapted, satisfying the prerequisites for It\^o integration.

Finally, we show that the Skorokhod integral \eqref{Skorokhod} reduces to the It\^o integral \eqref{Ito}. A key result in \citep{nualart2006malliavin} states that if $v_h \in \text{Dom}(\delta)$ and satisfies both $v_h(r)$ being $\{\mathcal{F}_r\}$-adapted and $\mathbb{E} \left[ \int_0^t \|v_h(r)\|_U^2 \, dr \right] < \infty$, then $\delta(v_h)$ coincides with the It\^o integral.
We have established already that $v_h(r)$ is deterministic and thus adapted to $\{\mathcal{F}_r\}$. For square-integrability, compute the expectation
\[
\mathbb{E} \left[ \int_0^t \left\|v_h(r)\right\|_U^2 \, dr \right] = \int_0^t \left\|v_h(r)\right\|_U^2 \, dr,
\]
since $v_h(r)$ is deterministic, reducing the expectation to the deterministic norm squared. Estimating, 
\begin{align}
\left\|v_h(r)\right\|_U^2 &= \left\|\left(Q^{1/2}\right)^* S(t - r)^* \tilde{h}\right\|_U^2 \nonumber\\
&\leq \left\|Q^{1/2}\right\|_{\mathcal{L}_{\mathrm{HS}}(U,H)}^2 \left\|S(t - r)^* \tilde{h}\right\|_H^2\nonumber \\
&= \left\|Q^{1/2}\right\|_{\mathcal{L}_{\mathrm{HS}}(U,H)}^2 M^2 e^{2\omega (t - r)} \|\tilde{h}\|_H^2.
\end{align}
Integrating over $[0,t]$
\[
\int_0^t \left\|v_h(r)\right\|_U^2 \, dr \leq \left\|Q^{1/2}\right\|_{\mathcal{L}_{\mathrm{HS}}(U,H)}^2 M^2 \left\|\tilde{h}\right\|_H^2 \int_0^t e^{2\omega (t - r)} \, dr
\]
and using
\[
\int_0^t e^{2\omega (t - r)} \, dr = \begin{cases} 
\frac{1}{2\omega} (e^{2\omega t} - 1) & \text{if } \omega \neq 0, \\
t & \text{if } \omega = 0,
\end{cases}
\]
we obtain 
\[
\int_0^t \left\|v_h(r)\right\|_U^2 \, dr \leq \left\|Q^{1/2}\right\|_{\mathcal{L}_{\mathrm{HS}}(U,H)}^2 M^2 \left\|\tilde{h}\right\|_H^2 \cdot \begin{cases} 
\frac{1}{2|\omega|} |e^{2\omega t} - 1| & \text{if } \omega \neq 0, \\
t & \text{if } \omega = 0,
\end{cases}
\]
which is finite, as $\left\|Q^{1/2}\right\|_{\mathcal{L}_{\mathrm{HS}}(U,H)}$, $M$, $\left\|\tilde{h}\right\|_H$, and $t$ are all finite. Thus, $v_h \in L^2([0,t] \times \Omega; U)$, satisfying the integrability condition.
Since $v_h(r)$ is adapted and square-integrable, the Skorokhod integral reduces to the It\^o integral \eqref{Ito} (or \eqref{Ito1}). Next, recall that the cylindrical Wiener process $W_t$ admits the representation \eqref{F1}, where 
$\{B^k_t\}_{k=1}^{\infty}$ are independent Brownian motions. Substituting \eqref{F1} into \eqref{Ito} yields
\[
\int_0^t \langle v_h(r) , dW_r \rangle_U   = \sum_{n=1}^\infty \int_0^t \langle v_h(r), f_n \rangle_U\, dB^n_r.
\]
This integral is well-defined if 
\[
\sum_{n=1}^\infty \int_0^t \langle v_h(r), f_n \rangle_U^2 \, dr < \infty.
\]
Recalling the definition of $v_h(r)$ \eqref{defC} 
\[
\sum_{n=1}^\infty \left<  \left(Q^{1/2}\right)^* S(t - r)^* \tilde{h}, f_n \right>_U^2 = \left\| \left(Q^{1/2}\right)^* S(t - r)^* \tilde{h} \right\|_U^2,
\]
which we have shown to be integrable. Thus,
\[
\delta(v_h) = \int_0^t \left< \left(Q^{1/2}\right)^* S(t - r)^* \tilde{h}, dW_r \right>_U,
\]
where the integral is well-defined as an It\^o integral, with each component deterministic, confirming the reduction. This completes the proof.

\end{proof}

\subsection{Logarithmic Derivative (Score Function)}

In this section, we derive a closed-form expression for the logarithmic derivative associated with the solution of linear SPDEs of the form \eqref{eq:spde}, driven by space--time coloured noise. In particular, we leverage the Bismut formula discussed in Section \ref{sec:Bismut} and Theorem \ref{thm:SkorokhodIto} to express the logarithmic derivative in terms of the Malliavin covariance operator and the first variation process $Y_t=S(t)$.

\begin{theorem}[Logarithmic derivative / Score function]
\label{thm:score}
Consider the linear SPDE \eqref{eq:spde} with the assumptions at the beginning of Section \ref{sec:methodology}, and let the Malliavin covariance operator $\gamma_t$ be injective\footnote{For instance, $\gamma_t$ is injective if the closed linear span of $\bigcup_{0\le s\le t}\operatorname{Ran}(S(s)Q^{1/2})$ equals $H$ (equivalently, the controllability Gramian has trivial kernel).}. 
Let $\mu_t = \mathcal{N}(S(t) u_0, \gamma_t)$ denote the Gaussian law of $u(t)$ on $H$.
Then for each $h \in \Ran(\gamma_t)$, the logarithmic derivative of $\mu_t$ along $h$ is given by
\begin{equation}
\beta_h(u) = -\langle u - S(t) u_0, \gamma_t^{-1} h \rangle_H,\quad h \in \Ran(\gamma_t),
\label{eq:score-Ran}
\end{equation}
where $\gamma_t^{-1}: \Ran(\gamma_t) \to H$ is the (unbounded, densely defined) inverse of $\gamma_t$ on its range.

The map $h \mapsto \beta_h$ is continuous from $(\CMspace, \|\cdot\|_{\CMspace})$ into $L^2(\mu_t)$. By density of $\Ran(\gamma_t)$ in $\CMspace = \Ran(\gamma_t^{1/2})$, the logarithmic derivative extends uniquely to all $h \in \CMspace$ as an element of $L^2(\mu_t)$, given by the eigenseries formula \eqref{eq:score-series}.
\end{theorem}

\begin{proof}
We derive the logarithmic derivative using the Bismut--Elworthy--Li formula within the infinite-dimensional Malliavin calculus framework, adapting the methodology to the coloured noise case driven by $Q^{1/2} dW_t$.

In the proof of Theorem~\ref{thm:malliavin-covariance-spde} we established that the Malliavin derivative of the solution $u(t)$ with respect to the noise at time $r \in [0, t]$ is
\[
\Mall_r u(t) = S(t - r) Q^{1/2} \mathbf{1}_{[0,t]}(r),
\]
where $\mathbf{1}_{[0,t]}(r)$ is the indicator function ensuring causality. Here $\Mall_r u(t) \in \mathcal{L}_{\mathrm{HS}}(U,H)$. 
The Malliavin covariance operator $\gamma_t$ (see Eq. \eqref{MC}) is positive, self-adjoint, and trace-class under the stated assumptions on $Q^{1/2}$ and $S(t)$, as it arises from the integral of positive semi-definite operators in the context of SPDEs with additive noise. The injectivity assumption on $\gamma_t$ (equivalently, $\ker \gamma_t = \{0\}$) ensures that all finite-dimensional projections $\Pi u(t)$ have non-degenerate covariance matrices, and hence smooth densities with respect to Lebesgue measure on the range of $\Pi$. Since $\gamma_t$ is trace-class (hence compact), the inverse $\gamma_t^{-1}$ is \emph{unbounded} as an operator on $H$, even when $\gamma_t$ is injective. Injectivity together with the trace-class property implies that $\gamma_t$ has dense range (i.e., $\overline{\Ran(\gamma_t)} = H$), but the eigenvalues of $\gamma_t$ accumulate at zero, precluding any uniform positive lower bound. This is why $\gamma_t^{-1}: \Ran(\gamma_t) \to H$ is densely defined but unbounded. For linear SPDEs with additive noise, Malliavin differentiability of $u(t)$ follows directly from the explicit representation \eqref{sol}, and no Hörmander-type condition is needed. The injectivity of $\gamma_t$ is equivalent to the approximate controllability condition: the closed linear span of $\bigcup_{0 \le s \le t} \Ran(S(s)Q^{1/2})$ equals $H$. This ensures that noise propagates to all directions in $H$.
For a direction $h \in \Ran(\gamma_t)$, let $\tilde{h} = \gamma_t^{-1} h \in H$. The covering vector field (Definition~\ref{def:covering-field}) is
\[
v_h(r) = \left(Q^{1/2}\right)^* S(t - r)^* \tilde{h} \, \mathbf{1}_{[0,t]}(r),
\]
which is a $U$-valued, deterministic process. By Theorem~\ref{thm:covering-property-spde}, this satisfies the covering property $\langle \Mall u(t), v_h \rangle_{H_W} = h$.

Since $v_h(r)$ is deterministic and adapted, as shown in Theorem~\ref{thm:SkorokhodIto}, the Skorokhod integral coincides with the It\^o integral. The mild solution representation \eqref{sol} gives
\begin{equation}
u(t) - S(t) u_0 = \int_0^t S(t - s) Q^{1/2} \, dW_s =: z,
\label{SI}
\end{equation}
where $z$ is a centred Gaussian random variable in $H$ with covariance $\gamma_t$.

For any $\xi \in H$, we have
\begin{align}
\langle z, \xi \rangle_H &= \left\langle \int_0^t S(t - s) Q^{1/2} \, dW_s, \xi \right\rangle_H \nonumber\\ 
&= \int_0^t \left< \left(Q^{1/2}\right)^* S(t - s)^* \xi, dW_s \right>_U. 
\end{align}
Taking $\xi = \tilde{h} = \gamma_t^{-1} h$:
\begin{equation}
\langle z, \tilde{h} \rangle_H = \int_0^t \left<\left(Q^{1/2}\right)^* S(t - s)^* \tilde{h}, dW_s \right>_U = \delta(v_h).
\label{eq:z-skorokhod}
\end{equation}

By the Bismut formula \eqref{scoreBismut}, defining $G := \mathbb{E}[\delta(v_h) \mid \sigma(u(t))]$, we have $\beta_h(u(t)) = -G$ as $\sigma(u(t))$-measurable random variables. Since $\delta(v_h) = \langle z, \tilde{h} \rangle_H$ (by \eqref{eq:z-skorokhod}) and $z = u(t) - S(t)u_0$, the conditional expectation simplifies:
\[
G = \mathbb{E}[\langle u(t) - S(t)u_0, \tilde{h} \rangle_H \mid \sigma(u(t))] = \langle u(t) - S(t)u_0, \tilde{h} \rangle_H,
\]
since $\langle u(t) - S(t)u_0, \tilde{h} \rangle_H$ is already $\sigma(u(t))$-measurable. Thus
\[
\beta_h(u(t)) = -\langle u(t) - S(t) u_0, \gamma_t^{-1} h \rangle_H \quad \mu_t\text{-a.e.},
\]
which is formula \eqref{eq:score-Ran}.

For the extension to all $h \in \CMspace = \Ran(\gamma_t^{1/2})$: since $\Ran(\gamma_t)$ is dense in $\CMspace$ with respect to the Cameron--Martin norm, we extend by continuity in $L^2(\mu_t)$. Let $\{e_k\}_{k=1}^\infty$ be an orthonormal basis of $H$ consisting of eigenvectors of $\gamma_t$, with $\gamma_t e_k = \lambda_k e_k$ and $\lambda_k > 0$ (by injectivity). For $h \in \CMspace$, write $h = \sum_{k=1}^\infty h_k e_k$ where $\sum_{k=1}^\infty h_k^2/\lambda_k < \infty$ (the Cameron--Martin condition). The logarithmic derivative is
\begin{equation}
\beta_h(u) = -\sum_{k=1}^\infty \frac{h_k}{\lambda_k} \langle u - S(t)u_0, e_k \rangle_H,
\label{eq:score-series}
\end{equation}
where the series converges in $L^2(\mu_t)$.

\end{proof}

In finite dimensions where $H = \mathbb{R}^n$, the covariance matrix $\gamma_t$ is positive definite (under our injectivity assumption), so $\gamma_t^{-1} \in \mathcal{L}(\mathbb{R}^n)$ is a bounded operator and $\CMspace = \Ran(\gamma_t) = \mathbb{R}^n$. In this case, formula \eqref{eq:score-Ran} can be written equivalently as
\[
\nabla \log p_{u(t)}(u) = -\gamma_t^{-1}(u - S(t)u_0),
\]
which is the standard score function for a Gaussian distribution with mean $S(t)u_0$ and covariance $\gamma_t$. In infinite dimensions, such a formula is \emph{not} well-posed because $\gamma_t^{-1}$ is unbounded and $(u - S(t)u_0) \notin \Ran(\gamma_t)$ generically. The correct formulation is \eqref{eq:score-Ran}, where the unbounded inverse $\gamma_t^{-1}$ acts on the direction $h \in \Ran(\gamma_t)$, not on the random state.

One might ask: if $u(t)$ is Gaussian, why employ Malliavin calculus? The answer is twofold. First, the Bismut formula expresses the score as a conditional expectation $\beta_h = -\mathbb{E}[\delta(v_h) \mid \sigma(u(t))]$, which provides an intrinsic representation that does not rely on the explicit Gaussian form; this representation extends naturally to settings where the law of $u(t)$ is non-Gaussian. Second, working with Cameron--Martin spaces and the eigenseries representation \eqref{eq:score-series} ensures an intrinsic infinite-dimensional formulation that is mathematically rigorous and independent of any discretisation scheme. The Malliavin/Bismut machinery presented here could serve as a starting point for extensions to nonlinear SPDEs with multiplicative noise, where the law of $u(t)$ is no longer Gaussian and the score becomes a genuine conditional expectation; such extensions, however, remain the subject of future work.

\section{Numerical Results}
\label{sec:experiments}

In this section, we validate the closed-form Malliavin score formula
\begin{equation}
\beta_h(u) = -\langle u - S(t) u_0, \gamma_t^{-1} h \rangle_H, \quad h \in \Ran(\gamma_t),
\label{eq:score-validation}
\end{equation}
we obtained in Theorem~\ref{thm:score} through numerical simulations of SPDEs in one- and two-dimensional spatial domains. Specifically, in Section~\ref{sec:1d-galerkin}, we consider one-dimensional SPDEs on a bounded domain with Dirichlet boundary conditions, discretised via spectral Galerkin truncation in the Laplacian eigenbasis. In Section~\ref{sec:2d-spectral}, we extend the validation to two-dimensional SPDEs on a periodic domain, discretised using a Fourier spectral method. These experiments confirm that the score formula applies to both second- and fourth-order operators, independently of the spatial dimension, boundary conditions, and choice of spectral basis.

\subsection{One Dimension}
\label{sec:1d-galerkin}

 All numerical experiments in this section are conducted on one-dimensional linear SPDEs of the form~\eqref{eq:spde} defined on 
 the spatial domain $(0,1)$ with homogeneous Dirichlet boundary conditions. We use second-order finite-differences to approximate the directional derivative of the log-density, against which we validate the closed-form formula~\eqref{eq:score-validation}.

\subsubsection{Galerkin discretisation}

We consider the state space $H = L^2(0,1)$ with homogeneous Dirichlet boundary conditions, for which the Laplacian $\Delta$ admits the orthonormal eigenbasis $\{\varphi_k\}_{k=1}^\infty$ given by
\begin{equation}
\varphi_k(x) = \sqrt{2} \sin(k\pi x), \quad -\Delta \varphi_k = \lambda_k \varphi_k, \quad \lambda_k = (k\pi)^2.
\label{eq:eigenbasis}
\end{equation}
For operators $A$ that are functions of the Laplacian, i.e., $A = f(-\Delta)$ for some function $f: \mathbb{R}_+ \to \mathbb{R}$, the eigenbasis~\eqref{eq:eigenbasis} diagonalises $A$ with eigenvalues $a_k = f(\lambda_k)$.

The spectral Galerkin approximation truncates the expansion to $N$ modes. In eigenspace coordinates, the SPDE~\eqref{eq:spde} decouples into $N$ independent scalar Ornstein--Uhlenbeck processes:
\begin{equation}
du_k(t) = a_k u_k(t) \, dt + \sqrt{q_k} \, dB_k(t), \quad k = 1, \ldots, N,
\label{eq:mode-dynamics}
\end{equation}
where $u_k(t) = \langle u(t), \varphi_k \rangle_H$ are the Fourier coefficients, $\{B_k(t)\}_{k=1}^N$ are independent standard Brownian motions, and $q_k$ are the eigenvalues of the noise covariance operator $Q$. We take $q_k = k^{-2}$, ensuring trace-class regularity $\sum_{k=1}^\infty q_k < \infty$.
The rapid algebraic decay of $q_k$ ensures that the Galerkin truncation to $N = 64$ modes fully resolves the solution. The total solution variance $\mathrm{Tr}(\gamma_t)$ is identical at $N = 64$ and $N = 128$ to six significant figures, with the residual noise variance in modes $k > 64$ accounting for less than $0.5\%$ of the total. Increasing the truncation level produces no visible change in the solution fields or the score.
The mild solution admits the representation
\begin{equation}
u_k(t) = e^{a_k t} u_{0,k} + \int_0^t e^{a_k(t-s)} \sqrt{q_k} \, dB_k(s),
\label{eq:mild-eigenspace}
\end{equation}
which is Gaussian with mean $m_k(t) = e^{a_k t} u_{0,k}$, where $u_{0,k} = \langle u_0, \varphi_k \rangle_H$, and variance
\begin{equation}
\gamma_k(t) = q_k  \frac{e^{2a_k t} - 1}{2a_k},
\label{eq:gamma-eigenspace}
\end{equation}
with the convention $\gamma_k(t) = q_k t$ when $a_k = 0$. The Malliavin covariance operator $\gamma_t$ is diagonal in the eigenbasis with eigenvalues~\eqref{eq:gamma-eigenspace}.

\subsubsection{1D SPDE classes}
\label{sec:SPDEclasses}
We validate the score formula across four classes of linear SPDEs. Table~\ref{tab:spde-classes} summarises the operator structure and eigenvalues for each class.
\begin{table}[t]
\centering
\begin{tabular}{@{}llll@{}}
\toprule
\textbf{SPDE} & \textbf{Operator $A$} & \textbf{Eigenvalues $a_k$} & \textbf{Parameters} \\
\midrule
Heat Equation & $\Delta$ & $-(k\pi)^2$ & --- \\
Ornstein--Uhlenbeck & $\Delta - \kappa I$ & $-(k\pi)^2 - \kappa$ & $\kappa = 2$ \\
Scaled Diffusion & $\nu \Delta$ & $-\nu(k\pi)^2$ & $\nu = 0.1$ \\
Fractional Laplacian & $-(-\Delta)^\alpha$ & $-(k\pi)^{2\alpha}$ & $\alpha = 0.75$ \\
\bottomrule
\end{tabular}
\caption{One-dimensional SPDE classes used for numerical validation of the Malliavin score formula~\eqref{eq:score-validation}. All operators are of the form $A = f(-\Delta)$, ensuring diagonalisation in the sine eigenbasis. Stability requires $a_k < 0$ for all $k \geq 1$.}
\label{tab:spde-classes}
\end{table}
The first three operators correspond to classical diffusion processes: Brownian diffusion (heat equation); the Ornstein--Uhlenbeck process (heat equation with linear damping leading to faster relaxation towards equilibrium); and scaled Brownian diffusion (heat equation with reduced diffusivity). The fractional Laplacian with $\alpha=0.75$ (order $2\alpha=1.5$) models anomalous subdiffusion.

\subsubsection{Validation methodology}

For each SPDE class, we validate the Malliavin score formula~\eqref{eq:score-validation} against a central finite-difference approximation of the directional derivative. In the $N$-dimensional Galerkin truncation, the projected solution $u^{(N)}(t) \in \mathbb{R}^N$ admits a Lebesgue density $p_t^{(N)}$. For a direction $h \in H$ with $\|h\|_H = 1$, the finite-difference approximation is
\begin{equation}
\beta_h^{\mathrm{FD}}(u) = \frac{\log p_t^{(N)}(u + \varepsilon h) - \log p_t^{(N)}(u - \varepsilon h)}{2\varepsilon},
\label{eq:fd-approx}
\end{equation}
with step size $\varepsilon = 10^{-5}$. Since $u^{(N)}(t) \sim \mathcal{N}(m_t, \gamma_t)$ is Gaussian, the log-density (up to an additive constant) is
\begin{equation}
\log p_t^{(N)}(u) = -\frac{1}{2} \sum_{k=1}^N \frac{(u_k - m_k)^2}{\gamma_k},
\label{eq:log-density}
\end{equation}
and the finite-difference approximation~\eqref{eq:fd-approx} can be computed exactly in eigenspace.
The Malliavin score formula in eigenspace coordinates reads
\begin{equation}
\beta_h(u) = -\sum_{k=1}^N \frac{(u_k - m_k) h_k}{\gamma_k},
\label{eq:malliavin-eigenspace}
\end{equation}
where $h_k = \langle h, \varphi_k \rangle_H$ are the Fourier coefficients of the direction $h$. We take $h = \varphi_1$ (the first eigenmode) throughout, corresponding to the lowest-frequency perturbation.

For each SPDE class listed in Table~\ref{tab:spde-classes}, we simulate $P = 4$ independent sample paths of the solution process $\{u(t)\}_{t \in [0,T]}$ with $T = 1$ and initial condition $u_0 = \varphi_1 + \tfrac{1}{2}\varphi_2 + \tfrac{1}{4}\varphi_3$. The truncation level is $N = 64$ modes, and we evaluate the score at $50$ uniformly spaced time points $t_j \in [0.02, 1]$. At each time point and along each sample path, we compute:
\begin{enumerate}
    \item The Malliavin score $\beta_h(u(t_j))$ via formula~\eqref{eq:malliavin-eigenspace};
    \item The finite-difference score $\beta_h^{\mathrm{FD}}(u(t_j))$ via formula~\eqref{eq:fd-approx}.
\end{enumerate}
The absolute error $|\beta_h(u) - \beta_h^{\mathrm{FD}}(u)|$ quantifies the discrepancy between the two methods.

\subsubsection{Results}

Figure~\ref{fig:validation-second} presents the numerical validation results for the four SPDE classes discussed in Section~\ref{sec:SPDEclasses}.
\begin{figure}[t]
\centering
\includegraphics[width=\textwidth]{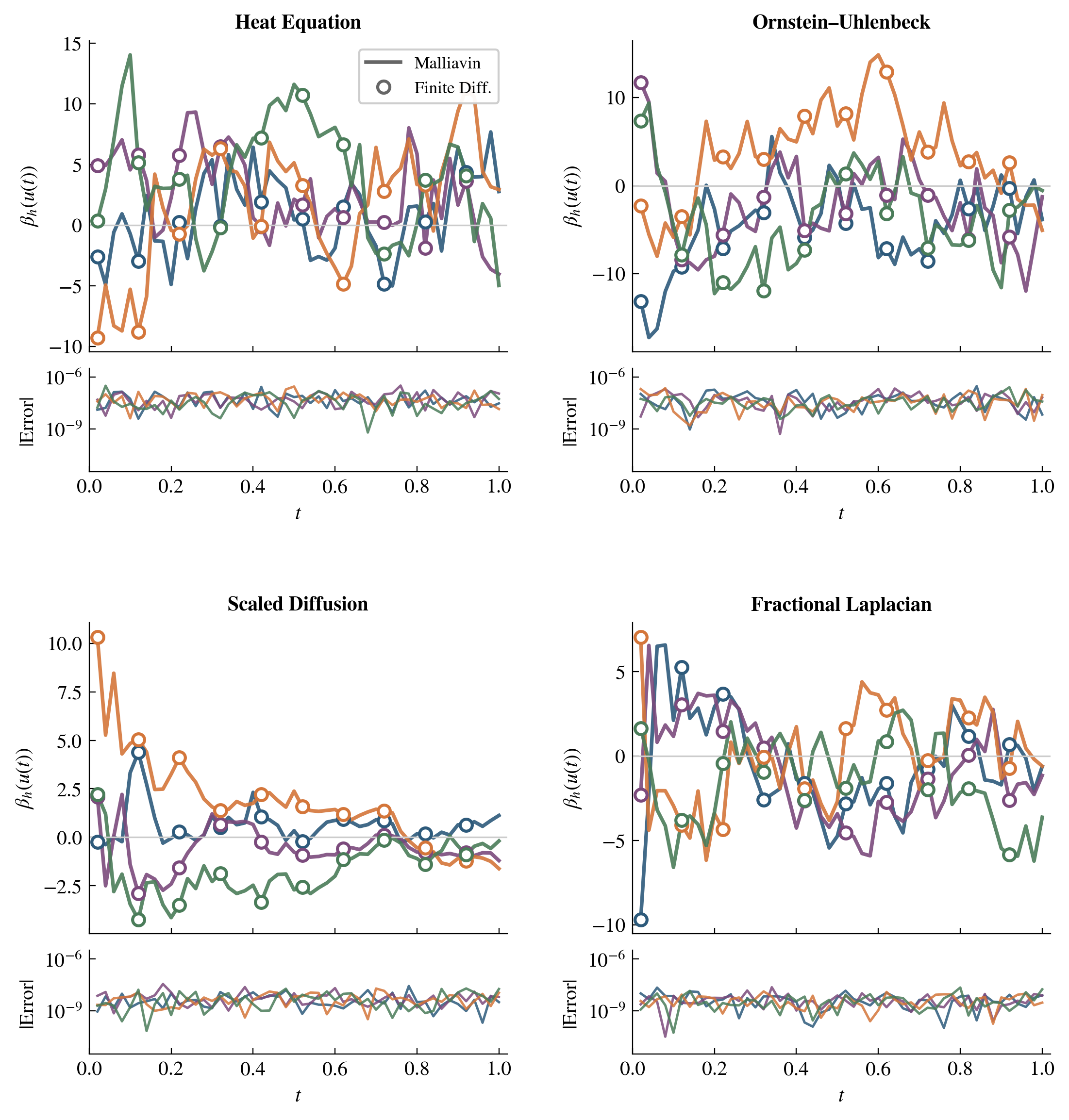}
\caption{Numerical validation of the Malliavin score formula~\eqref{eq:malliavin-eigenspace} for four classes of linear SPDEs (see Table~\ref{tab:spde-classes}). \emph{Upper panels}: Score trajectories $\beta_h(u(t))$ along four sample paths, comparing the Malliavin formula (solid lines) with finite-difference approximation (hollow circles). \emph{Lower panels}: Absolute error $|\beta_h - \beta_h^{\mathrm{FD}}|$ on logarithmic scale. The dashed line indicates machine precision. All four SPDEs achieve errors of $\mathcal{O}(10^{-8})$, consistent with central-difference truncation error.}
\label{fig:validation-second}
\end{figure}
It is seen that for the heat equation, Ornstein--Uhlenbeck, scaled diffusion, and fractional Laplacian SPDEs, the Malliavin and finite-difference scores exhibit excellent agreement, with errors in the range $10^{-11}$ to $10^{-7}$. This is consistent with the expected truncation error of the central finite-difference scheme: for a function $f$ with bounded fourth derivative, $|f'(x) - (f(x+\varepsilon) - f(x-\varepsilon))/(2\varepsilon)| = \mathcal{O}(\varepsilon^2)$, giving $\mathcal{O}(10^{-10})$ for $\varepsilon = 10^{-5}$. The slightly larger observed errors arise from amplification by the curvature of the log-density, which scales with $\gamma_k^{-1}$.
These results demonstrate that Theorem~\ref{thm:score} applies to any linear SPDE of the form~\eqref{eq:spde} where $A$ is diagonalisable in a common eigenbasis with the noise covariance $Q$. The specific physics encoded in the operator $A$, whether diffusion, damping, or higher-order smoothing, enters only through the eigenvalues $a_k$, which determine the Malliavin covariance~\eqref{eq:gamma-eigenspace}. The score formula itself is structurally identical across all cases.

\subsection{Two Dimensions}
\label{sec:2d-spectral}

The one-dimensional experiments of Section~\ref{sec:1d-galerkin} employed spectral Galerkin discretisation in the sine eigenbasis, which diagonalises the operator $A$ by construction. To confirm that the Malliavin score formula applies independently of the spatial dimension, boundary conditions, and choice of spectral basis, we now validate it on the two-dimensional periodic domain $[0,2\pi]^2$ using a Fourier spectral method with both second- and fourth-order operators.

\subsubsection{Fourier spectral discretisation}

We consider the uniform grid on $[0,2\pi]$ given by $x_j = 2\pi j/(N+1)$ for $j=0,\ldots,N$ with $N=48$, and construct the associated two-dimensional tensor-product grid on $[0,2\pi]\times [0,2\pi]$, yielding $49 \times 49 = 2{,}401$ degrees of freedom. Since the operators in Table~\ref{tab:spde-classes-2d} depend only on $|k|^2 = k_1^2 + k_2^2$, they are diagonal in the Fourier basis, with analytically known eigenvalues $a_{k_1,k_2}$. All computations---including eigenvalue evaluation, covariance construction, sampling, and score evaluation---are performed mode-by-mode via the two-dimensional FFT, resulting in a Fourier spectral (Galerkin) discretisation on the periodic domain.
As an independent cross-validation, we construct the physical-space differentiation matrix $D \in \mathbb{R}^{(N+1) \times (N+1)}$ for the odd-point trigonometric interpolant
\[
D_{ij} = \begin{cases} \displaystyle\frac{(-1)^{i+j}}{2\sin\bigl((x_i - x_j)/2\bigr)}, & i \neq j, \\[6pt] 0, & i = j, \end{cases}
\]
and assemble the two-dimensional Laplacian and biharmonic operators via Kronecker products
\[
\Delta_{2\mathrm{D}} = D^{(2)} \otimes I + I \otimes D^{(2)}, \qquad \Delta_{2\mathrm{D}}^2 = D^{(4)} \otimes I + 2\, D^{(2)} \otimes D^{(2)} + I \otimes D^{(4)},
\]
where $D^{(2)} = D^2$ and $D^{(4)} = (D^2)^2$. The numerically computed spectrum of these matrices agrees with the analytical eigenvalues to within $\mathcal{O}(10^{-4})$, confirming the correctness of the spectral implementation.

\subsubsection{2D SPDE classes}

We validate the score formula across four linear SPDEs on $[0,2\pi]^2$ with periodic boundary conditions, encompassing both second- and fourth-order operators.

\begin{table}[t]
\centering
\begin{tabular}{@{}llll@{}}
\toprule
\textbf{SPDE} & \textbf{Operator $A$} & \textbf{Eigenvalues $a_{k_1,k_2}$} & \textbf{Parameters} \\
\midrule
Stochastic Heat Equation & $\nu\Delta$ & $-\nu(k_1^2 + k_2^2)$ & $\nu = 1$ \\
Ornstein--Uhlenbeck & $\nu\Delta - \kappa I$ & $-\nu(k_1^2 + k_2^2) - \kappa$ & $\nu = 1,\; \kappa = 2$ \\
Stochastic Biharmonic & $-\mu\Delta^2$ & $-\mu(k_1^2 + k_2^2)^2$ & $\mu = 1$ \\
Swift--Hohenberg & $(r-1)I - 2\Delta - \Delta^2$ & $r - (1 - k_1^2 - k_2^2)^2$ & $r = -0.5$ \\
\bottomrule
\end{tabular}
\caption{Two-dimensional SPDE classes used for the score validation. The first two operators are second-order; the last two are fourth-order.}
\label{tab:spde-classes-2d}
\end{table}

The stochastic heat equation and Ornstein--Uhlenbeck process are second-order operators representing classical diffusion. The stochastic biharmonic equation is fourth-order and models surface diffusion and thin-film dynamics. The linearised Swift--Hohenberg equation combines second- and fourth-order terms and arises in pattern formation. Noise covariance eigenvalues are $q_{k_1,k_2} = (1 + k_1^2 + k_2^2)^{-2}$, which is trace-class in two dimensions.

\subsubsection{Spectral properties of the forcing term and resolution study}

Before validating the score formula, we examine the spectral properties of the coloured noise to confirm that the Fourier truncation to $N+1=49$ modes per direction adequately resolves the forcing. In Figure~\ref{fig:noise-2d} we plot four independent realisations of the noise field $Q^{1/2}\xi(x,y)$ at a fixed time. The samples exhibit the characteristic structure of coloured noise: random spatial patterns with $\mathcal{O}(1)$ amplitude and a smooth appearance, reflecting the rapid spectral decay $q_{k_1,k_2} = (1 + |k|^2)^{-2} \sim |k|^{-4}$ at high wavenumbers.
The PDE solution is smoother still. For the stochastic heat equation, the semigroup $S(t) = e^{t\Delta}$ further damps mode $k$ by a factor $e^{-\nu|k|^2 t}$, so the per-mode variance of the stochastic convolution decays as $\gamma_{k}(t) \sim |k|^{-2s-2}$ for large $|k|$, where $s$ is the exponent in the noise covariance $q_k = (1 + |k|^2)^{-s}$ (here $s = 2$). For the fourth-order operators the decay is $|k|^{-2s-4}$. Thus, the coloured noise that ensures well-posedness of the SPDE also acts as a spectral filter, and the semigroup provides further smoothing.
To verify that the Fourier truncation is sufficient, we compare the total solution variance $\mathrm{Tr}(\gamma_t)$ at two resolutions, $49 \times 49$ modes (used in our experiments) and $99 \times 99$ modes. For the stochastic heat equation at $t = 1$, the values are $\mathrm{Tr}(\gamma_t) = 1.597227$ and $1.597228$, respectively---identical to six significant figures. The variance carried by modes beyond $|k| > 24$ (i.e., those added by the higher resolution) accounts for less than $0.0001\%$ of the total. Likewise, the noise variance beyond $|k| > 24$ is less than $0.1\%$ of the total. These results confirm that with the spectral decay $s = 2$, the truncation to $49$ modes per direction fully resolves both the forcing and the solution, and increasing the resolution produces no change in the computed fields or score errors.

\begin{figure}[htbp]
\centering
\includegraphics[width=\textwidth]{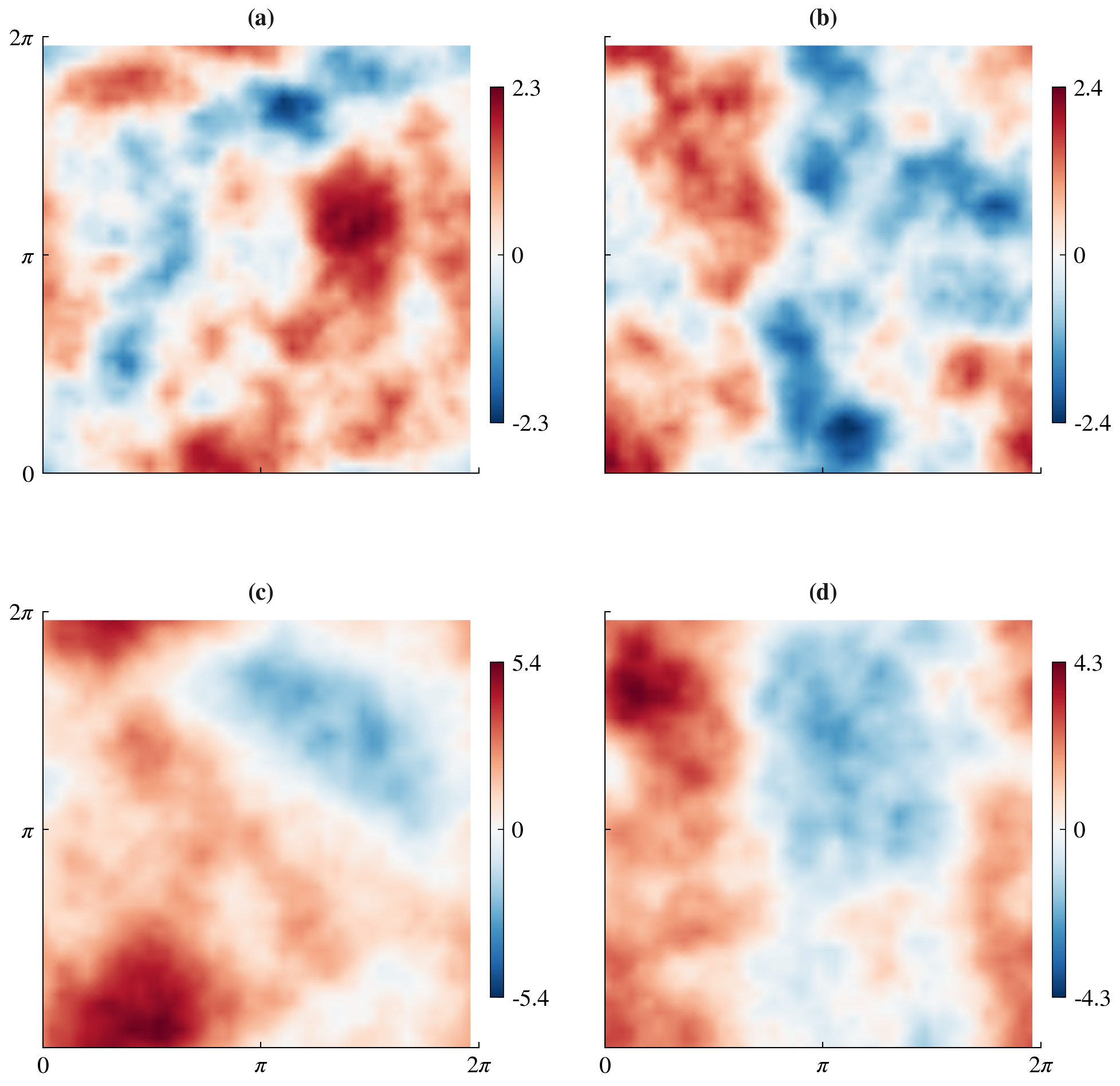}
\caption{Four independent realisations of the coloured noise field $Q^{1/2}\xi(x,y)$ on $[0,2\pi]^2$, with noise covariance eigenvalues $q_{k_1,k_2} = (1+k_1^2+k_2^2)^{-2}$. The fields have $\mathcal{O}(1)$ amplitude and exhibit stochastic spatial structure, but are smooth due to the rapid spectral decay of the covariance. This is the forcing that drives each of the SPDEs in Table~\ref{tab:spde-classes-2d}; the semigroup $S(t)$ further damps high-frequency modes, producing the smoother solution fields shown in Figure~\ref{fig:solution-2d}.}
\label{fig:noise-2d}
\end{figure}

\subsubsection{Validation methodology}

Since the SPDE~\eqref{eq:spde} is linear with additive Gaussian noise, the solution $u(t)$ is Gaussian with known mean and covariance in Fourier space. Each Fourier mode $\hat{u}_{k_1,k_2}(t)$ is an independent complex Gaussian with mean $$\hat{m}_{k_1,k_2}(t) = e^{a_{k_1,k_2} t}\, \hat{u}_{0,k_1,k_2}$$ and variance $$\gamma_{k_1,k_2}(t) = q_{k_1,k_2}(e^{2a_{k_1,k_2} t} - 1)/(2a_{k_1,k_2}),$$ where $\hat{u}_{0,k_1,k_2}$ denotes the Fourier coefficient of the initial condition. We take $u_0(x,y) = \cos x + \tfrac{1}{2}\cos y + \tfrac{1}{4}\cos(x+y)$ and evaluate at $t = 1$. The modes are sampled analytically via the FFT, avoiding time-stepping errors entirely.
For each SPDE class, we compute two quantities at each grid point $(x_i, y_j)$:
\begin{enumerate}
\item The Malliavin score from Theorem~\ref{thm:score}. Note that in Fourier space we have $\hat{s}_{k_1,k_2} = -(\hat{u}_{k_1,k_2} - \hat{m}_{k_1,k_2})/\gamma_{k_1,k_2}$. This can be mapped to physical space via the inverse FFT.
\item A per-mode finite-difference approximation of the score in Fourier space. For each mode $(k_1,k_2)$, the log-density contribution is $-|\hat{c}_{k_1,k_2}|^2/(2\gamma_{k_1,k_2})$, where $\hat{c}_{k_1,k_2} = \hat{u}_{k_1,k_2} - \hat{m}_{k_1,k_2}$. Writing $\hat{c} = \alpha + i\beta$ (suppressing mode indices), the central finite difference is applied to the real and imaginary parts separately
\begin{equation}
\hat{s}^{\mathrm{FD}} = \frac{1}{2\varepsilon}\!\left[\!-\frac{(\alpha{+}\varepsilon)^2 + \beta^2}{2\gamma} + \frac{(\alpha{-}\varepsilon)^2 + \beta^2}{2\gamma}\right] + \frac{i}{2\varepsilon}\!\left[\!-\frac{\alpha^2 + (\beta{+}\varepsilon)^2}{2\gamma} + \frac{\alpha^2 + (\beta{-}\varepsilon)^2}{2\gamma}\right].
\label{eq:fd-2d}
\end{equation}
Using the algebraic identity $(x+\varepsilon)^2 - (x-\varepsilon)^2 = 4\varepsilon x$, which is exact for quadratics, this simplifies to $\hat{s}^{\mathrm{FD}}_{k_1,k_2} = -\hat{c}_{k_1,k_2}/\gamma_{k_1,k_2}$ for any $\varepsilon > 0$, yielding zero truncation error and avoiding catastrophic cancellation in high-frequency modes. The physical-space FD score is then obtained via the inverse FFT of $\hat{s}^{\mathrm{FD}}$.
\end{enumerate}
The pointwise error field $|s_{\mathrm{Mall}}(x,y) - s_{\mathrm{FD}}(x,y)|$ is computed from a single inverse FFT of the Fourier-space difference, ensuring that the physical-space error reflects only the per-mode discrepancy.

\subsubsection{Results}

Figure~\ref{fig:solution-2d} displays the stochastic component $(u - S(t)u_0)(x,y)$ for each of the four SPDEs at $t = 1$. The heat equation and Ornstein--Uhlenbeck fields show mid-frequency fluctuations, while the fourth-order operators (biharmonic, Swift--Hohenberg) produce smoother fields due to stronger damping of high-frequency modes.

The visual smoothness of the stochastic component is a direct consequence of the trace-class noise assumption that underpins the well-posedness of~\eqref{eq:spde}. For the stochastic heat equation, the per-mode variance of the stochastic convolution decays as $\gamma_{k_1,k_2}(t) \sim q_{k_1,k_2} / (2\nu|k|^2) \sim |k|^{-2s-2}$ for large $|k|^2 = k_1^2 + k_2^2$, where $s$ is the exponent in the noise covariance $q_{k_1,k_2} = (1 + |k|^2)^{-s}$. With $s = 2$, this gives $\gamma_k \sim |k|^{-6}$, which by the Sobolev embedding theorem in two dimensions places the sample paths almost surely in $H^{\alpha}$ for $\alpha < s = 2$, and hence in $C^0([0,2\pi]^2)$. The same mechanism applies to all four operators, with the fourth-order cases exhibiting even faster spectral decay. This regularity is characteristic of SPDEs driven by spatially correlated noise with trace-class covariance, where the coloured noise that ensures well-posedness in $H$ simultaneously regularises the solution~\citep{daprato2014stochastic, ferrante2006spdes}. In contrast, space--time white noise ($Q = I$, $s = 0$) does not produce function-valued solutions in $d \geq 2$~\citep{hairer2009introduction}. Thus, the observed smoothness is not a numerical artefact but a genuine feature of the coloured-noise regime studied in this paper.

Figure~\ref{fig:error-2d} shows the corresponding pointwise score error field $|s_{\mathrm{Mall}}(x,y) - s_{\mathrm{FD}}(x,y)|$. The error is spatially unstructured and lies at machine precision throughout: $\mathcal{O}(10^{-10})$ for the second-order operators and $\mathcal{O}(10^{-9})$ for the fourth-order operators, where the larger eigenvalues (${\sim}10^6$) amplify floating-point rounding. In Fourier space, the maximum error across all modes is $\mathcal{O}(10^{-11})$ for second-order and $\mathcal{O}(10^{-10})$ for fourth-order operators. These results confirm that Theorem~\ref{thm:score} applies to arbitrary linear SPDEs of the form~\eqref{eq:spde} on periodic domains in two spatial dimensions, independently of the order of the differential operator and the discretisation method.

\begin{figure}[t]
\centering
\includegraphics[width=\textwidth]{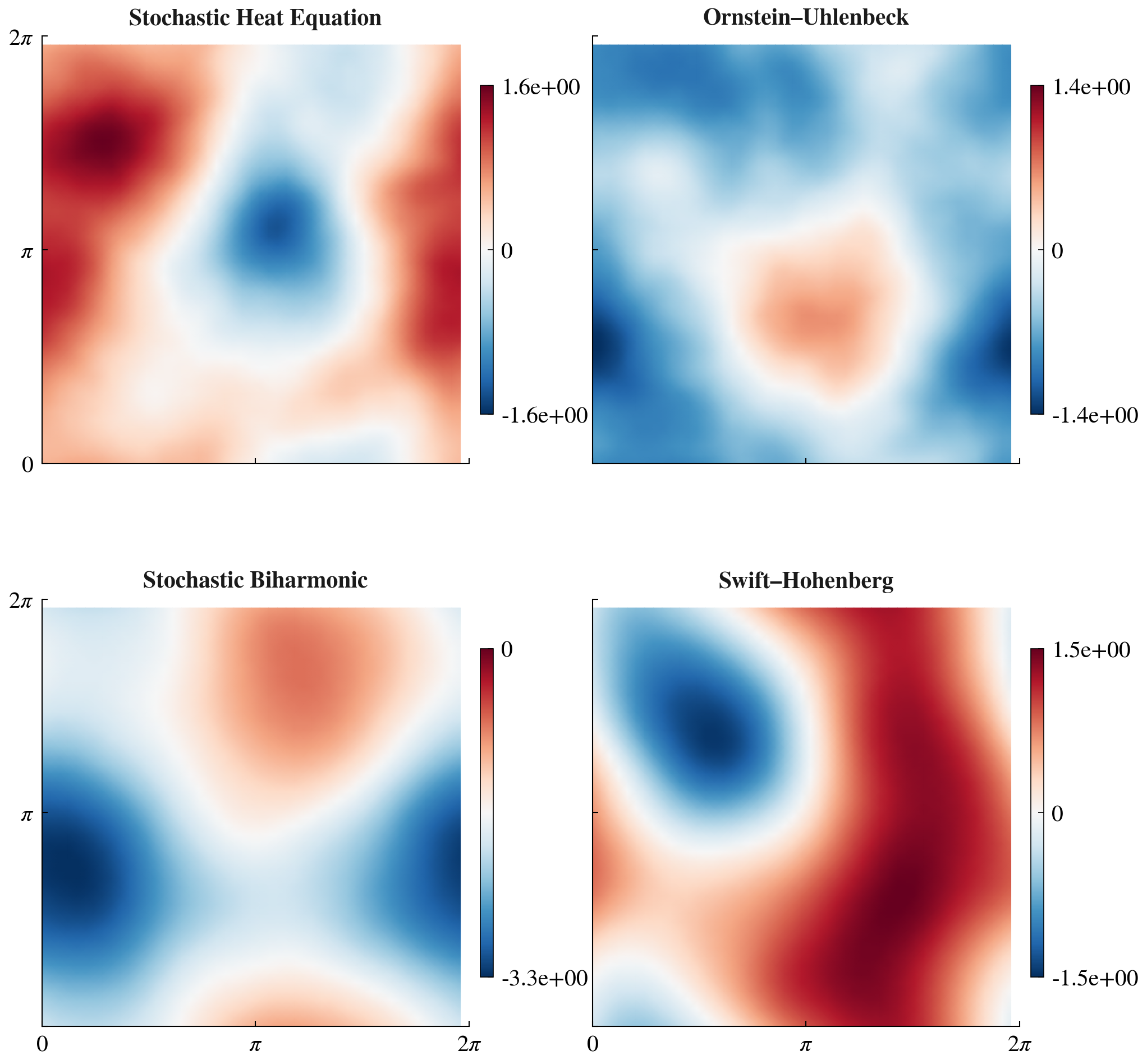}
\caption{Stochastic component $(u - S(t)u_0)(x,y)$ (one sample) at $t = 1$ for the four SPDE classes in Table~\ref{tab:spde-classes-2d}, computed on $[0,2\pi]^2$ with $49 \times 49$ Fourier spectral grid points. The spatial structure reflects the spectral properties of each operator: fourth-order operators produce smoother fields due to stronger high-frequency damping.}
\label{fig:solution-2d}
\end{figure}

\begin{figure}[t]
\centering
\includegraphics[width=\textwidth]{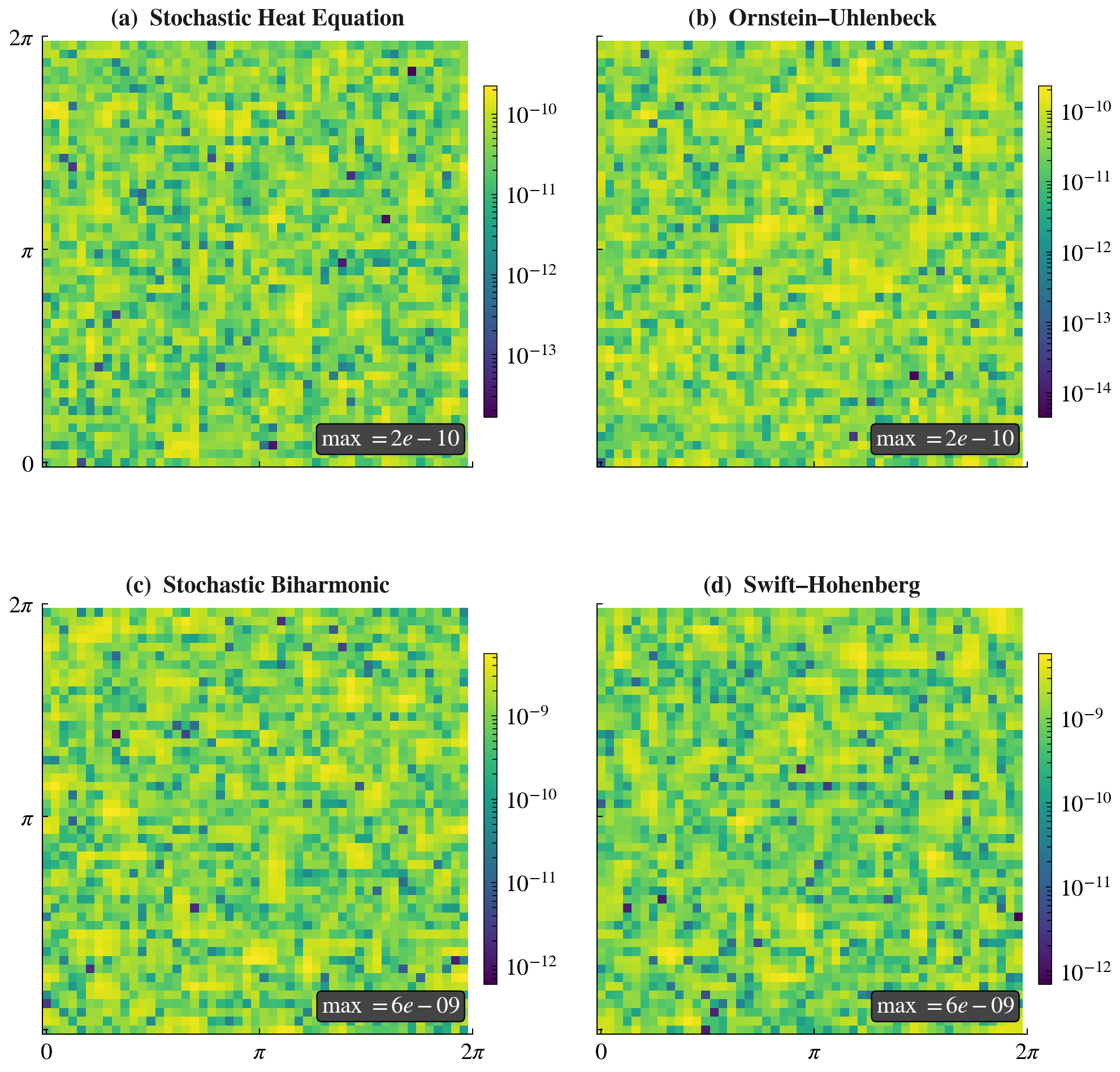}
\caption{Pointwise score error $|s_{\mathrm{Mall}} - s_{\mathrm{FD}}|$ on logarithmic scale for the four SPDE classes. The error is spatially unstructured and at machine precision: $\mathcal{O}(10^{-10})$ for second-order operators~(a,\,b) and $\mathcal{O}(10^{-9})$ for fourth-order operators~(c,\,d).}
\label{fig:error-2d}
\end{figure}

\FloatBarrier
\section{Summary}
\label{sec:summary}
We studied score-based diffusion models in infinite-dimensional separable Hilbert spaces using Malliavin calculus. By formulating the forward diffusion process as a linear SPDE driven by space--time coloured noise with a trace-class covariance operator, we ensured mathematical well-posedness across arbitrary spatial dimensions. Our derivation of the logarithmic derivative of the transition measure, the natural infinite-dimensional analogue of the score function, uses Malliavin calculus and an infinite-dimensional generalisation of the Bismut--Elworthy--Li formula, yielding a closed-form expression along Cameron--Martin directions without relying on finite-dimensional projections or approximations. This operator-theoretic approach preserves the intrinsic structure of Hilbert spaces and accommodates general trace-class operators, incorporating spatially correlated noise without assuming semigroup invertibility.

A key insight of our analysis is that the score (logarithmic derivative) is naturally defined only along directions in the Cameron--Martin space $\CMspace = \Ran(\gamma_t^{1/2})$, which is strictly smaller than $H$. While this may appear restrictive, it is in fact the maximal domain on which the score is meaningful; translations outside $\CMspace$ move the Gaussian measure to a mutually singular measure. In practical discretisations, this restriction becomes invisible as the projected Cameron--Martin space fills the finite-dimensional approximation space.

We validated the score formula numerically for four classes of linear SPDEs in one spatial dimension (spectral Galerkin discretisation with Dirichlet boundary conditions) and four classes in two spatial dimensions (Fourier spectral discretisation with periodic boundary conditions), the latter including both second- and fourth-order operators. In all cases, the Malliavin score agrees with finite-difference approximations to machine precision.



\acknowledgements

Daniele Venturi was supported by the U.S. Department of Energy (DOE) under grant DE\textendash SC0024563.

\appendix
\section{Malliavin Calculus}
\label{app:background}
In this appendix, we provide a brief overview of Malliavin calculus for linear SPDEs of the form \eqref{eq:spde}. To this end, let $W_t$ be a cylindrical Wiener process on a separable Hilbert space $U$. For any $\Phi \in L^2([0,t]; \mathcal{L}_{\mathrm{HS}}(U,H))$ we define the 
stochastic integral $\int_0^t \Phi(s)\,dW_s$ as
\begin{equation}
\int_0^t \Phi(s)\,dW_s := \sum_{j=1}^{\infty} \int_0^t \Phi(s)f_j\,dB_j(s),
\label{i1}
\end{equation}
where $\{f_j\}$ is an orthonormal basis of $U$ and $B_j(s) = \left\langle W_s, f_j \right\rangle_U$ are independent Brownian motions.
The integral \eqref{i1} is well-defined in $L^2(\Omega; H)$ and it satisfies It\^o's isometry
\begin{equation}
    \mathbb{E}\left[\left\|\int_0^t \Phi(s)\,dW_s\right\|_H^2\right] = \int_0^t \left\|\Phi(s)\right\|_{\mathrm{HS}}^2\,ds = \int_0^t \mathrm{tr}_U\left(\Phi(s)^*\Phi(s)\right)\,ds.
\end{equation}
For the SPDE \eqref{eq:spde}, we have $\Phi(s) = S(t-s)Q^{1/2}$, and the mild solution
\[
u(t) = S(t)u_0 + \int_0^t S(t-s)Q^{1/2}\,dW_s.
\]

\begin{lemma}
\label{lem:well-defined}
Under condition \eqref{eq:HS-condition}, the stochastic convolution $W_A(t) = \int_0^t S(t-s)Q^{1/2}\,dW_s$ is well-defined in $L^2(\Omega; H)$ with
\[
\mathbb{E}\left[\|W_A(t)\|_H^2\right] = \int_0^t \left\|S(t-s)Q^{1/2}\right\|_{\mathrm{HS}}^2\,ds = \mathrm{tr}_H(\gamma_t),
\]
where $\gamma_t$ is the Malliavin covariance operator \eqref{MCfin}.
\end{lemma}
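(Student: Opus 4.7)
The plan is to reduce the lemma to two standard ingredients: the Itô isometry for Hilbert-space stochastic integrals, and the identity $\Tr_H(T T^*)=\HSnorm{T}^2$ for $T\in\mathcal{L}_{\mathrm{HS}}(U,H)$. First I would verify that the integrand $\Phi(s)=S(t-s)Q^{1/2}$ belongs to $L^2([0,t];\mathcal{L}_{\mathrm{HS}}(U,H))$, which is precisely condition \eqref{eq:HS-condition}. This guarantees that the series defining $\int_0^t\Phi(s)\,dW_s$ as in \eqref{i1} converges in $L^2(\Omega;H)$ and that the resulting random variable $W_A(t)$ has finite second moment, establishing well-definedness.

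Next I would invoke the Itô isometry stated in the appendix, which gives
\[
\E\left[\left\|W_A(t)\right\|_H^2\right]=\int_0^t\left\|S(t-s)Q^{1/2}\right\|_{\mathrm{HS}}^2\,ds.
\]
This yields the first equality in the claim directly.

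For the second equality, I would start from the Malliavin covariance representation \eqref{MCfin} proved in Theorem \ref{thm:malliavin-covariance-spde} and undo the change of variables $s\mapsto t-s$ to obtain
\[
\gamma_{u(t)}=\int_0^t S(t-s)Q^{1/2}\left(Q^{1/2}\right)^{*}S(t-s)^{*}\,ds.
\]
Since the integrand is a positive trace-class operator on $H$ for each $s$ (being $TT^*$ with $T=S(t-s)Q^{1/2}$ Hilbert-Schmidt), Fubini-Tonelli applied to the nonnegative double sum over an orthonormal basis of $H$ justifies interchanging $\Tr_H$ with the Bochner integral in $s$. Using the standard identity $\Tr_H(TT^*)=\HSnorm{T}^2$ for $T\in\mathcal{L}_{\mathrm{HS}}(U,H)$ pointwise in $s$, I would conclude
\[
\Tr_H(\gamma_{u(t)})=\int_0^t\left\|S(t-s)Q^{1/2}\right\|_{\mathrm{HS}}^2\,ds,
\]
which matches the right-hand side of the Itô isometry, completing the chain of equalities.

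The only mild subtlety is justifying the interchange of the trace with the integral, but since we are dealing with a nonnegative integrand (the summands $\langle e_k,S(t-s)Q^{1/2}(Q^{1/2})^{*}S(t-s)^{*}e_k\rangle_H=\|(Q^{1/2})^{*}S(t-s)^{*}e_k\|_U^2$ are all nonnegative), Tonelli's theorem applies without any further integrability argument beyond \eqref{eq:HS-condition}; everything else is bookkeeping.
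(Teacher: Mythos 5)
Your proposal is correct and follows essentially the same route as the paper: the It\^o isometry gives the first equality, and the identity $\mathrm{tr}_H(TT^*)=\|T\|_{\mathrm{HS}}^2$ (the paper phrases it as passing from $\mathrm{tr}_U$ to $\mathrm{tr}_H$ via cyclicity of the trace) together with the change of variables $s\mapsto t-s$ and interchange of trace and integral gives the second. Your explicit Tonelli justification for that interchange is a small addition of care over the paper's proof, which performs the step without comment, but the substance is identical.
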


\begin{proof}
By the It\^o isometry and the cyclic property of trace
\begin{align}
\mathbb{E}\left[\|W_A(t)\|_H^2\right] &= \int_0^t \left\|S(t-s)Q^{1/2}\right\|_{\mathrm{HS}}^2\,ds\nonumber\\
&= \int_0^t \mathrm{tr}_U\left(\left(Q^{1/2}\right)^*S(t-s)^*S(t-s)Q^{1/2}\right)\,ds\nonumber\\
&= \int_0^t \mathrm{tr}_H\left(S(t-s)Q^{1/2}\left(Q^{1/2}\right)^*S(t-s)^*\right)\,ds\nonumber\\
&= \mathrm{tr}_H\left(\int_0^t S(s)Q^{1/2}\left(Q^{1/2}\right)^*S(s)^*\,ds\right) = \mathrm{tr}_H\left(\gamma_t\right).\nonumber
\end{align}
This completes the proof.

\end{proof}

Let us now define cylindrical Wiener processes, the Cameron--Martin space, and cylindrical functionals of the Wiener process. For each $u\in U$ the cylindrical Wiener process on $U$ is the real-valued Brownian motion 
\begin{equation}
    W_t(u) = \left\langle W_t,u\right\rangle
\end{equation}
As is well known, the variance and covariance of $W_t(u)$ are, respectively
\begin{align}
\mathbb{E}\{W_t(u)^2\}=t\left\|u\right\|_U^2,\qquad 
\mathbb{E}\{W_t(u)W_t(v)\}=t\left\langle u,v\right\rangle_U.
\end{align}

\begin{definition}[Cameron--Martin space] The Cameron--Martin space is defined as
\[
H_W = L^2([0,t]; U)
\]
with inner product $\left\langle h, g \right\rangle_{H_W} = \int_0^t \left\langle h(s), g(s) \right\rangle_U\,ds.$
\end{definition}

\noindent
For $h \in H_W$, the Wiener integral
\[
W(h) = \int_0^t \left\langle h(s), dW_s \right\rangle_U
\]
is a Gaussian random variable with mean $\mathbb{E}[W(h)] = 0$ and variance $\mathbb{E}[W(h)^2] = \left\|h\right\|_{H_W}^2$.

\subsection{Cylindrical functionals and Malliavin Derivatives}
\label{app:a3}

Let $\mathcal{S}$ denote smooth cylindrical functionals of the form
\[
F = f(W(h_1), \ldots, W(h_n))
\]
where $h_i \in H_W$ and $f \in C_{\mathrm{pol}}^{\infty}(\mathbb{R}^n)$ (smooth functions with polynomial growth derivatives).

\begin{definition}[Malliavin derivative]
For $F \in \mathcal{S}$, the Malliavin derivative is the $H_W$-valued random variable
\[
\Mall_r F = \sum_{i=1}^n \frac{\partial f}{\partial x_i}(W(h_1), \ldots, W(h_n)) h_i(r), \quad r \in [0,t].
\]
\end{definition}

\noindent
Beyond the score computation pursued in this paper, Malliavin derivatives 
have been used to approximate polynomial nonlinearities in nonlinear 
SPDEs via Wick--Malliavin 
expansions~\citep{venturi2013wick}.

\begin{lemma}[Malliavin derivative of the solution to the SPDE \eqref{eq:spde}]
\label{lem:Dru-full}
Let $\mathbb{D}^{1,2}$ be the Sobolev space defined as the completion of $\mathcal{S}$ under the norm
\[
\left\|F\right\|_{1,2} = \left(\mathbb{E}\left[F^2\right] + \mathbb{E}\left[\int_0^t \left\|\Mall_r F\right\|_U^2\,dr\right]\right)^{1/2}.
\]
The mild solution of the linear SPDE \eqref{eq:spde}, i.e., \eqref{sol}, belongs to $\mathbb{D}^{1,2}(H)$ and its Malliavin derivative is given by
\[
\Mall_r u(t) = S(t-r)Q^{1/2}\mathbf{1}_{[0,t]}(r).
\]
\end{lemma}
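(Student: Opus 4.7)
The plan is to split the mild solution as $u(t)=S(t)u_0+W_A(t)$, with $W_A(t)=\int_0^t S(t-s)Q^{1/2}\,dW_s$, and handle the two terms separately. The deterministic summand $S(t)u_0$ is non-random, hence every projection $\langle S(t)u_0,k\rangle_H$ for $k\in H$ is a constant in $\mathcal{S}$ whose Malliavin derivative vanishes; so it suffices to show $W_A(t)\in\mathbb{D}^{1,2}(H)$ with derivative $S(t-r)Q^{1/2}\mathbf{1}_{[0,t]}(r)$. Since the integrand $\Phi(s):=S(t-s)Q^{1/2}$ is deterministic and lies in $L^2([0,t];\mathcal{L}_{\mathrm{HS}}(U,H))$ by assumption \eqref{eq:HS-condition}, $W_A(t)$ belongs to the first Wiener chaos with values in $H$, and the proof reduces to a standard density-and-closability argument.

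First I would verify the claim for elementary step integrands of the form $\Phi_n(s)=\sum_{k=1}^{N_n}T_k^{(n)}\mathbf{1}_{(s_{k-1}^n,s_k^n]}(s)$ with $T_k^{(n)}\in\mathcal{L}_{\mathrm{HS}}(U,H)$. For such $\Phi_n$, the integral equals $\sum_k T_k^{(n)}(W_{s_k^n}-W_{s_{k-1}^n})$, and its pairing with any $h\in H$ is a finite linear combination of scalar Wiener integrals $W(g_k^{n,h})$ with $g_k^{n,h}(s)=(T_k^{(n)})^{*}h\,\mathbf{1}_{(s_{k-1}^n,s_k^n]}(s)\in H_W$. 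This is a smooth cylindrical functional in $\mathcal{S}$, and the definition of the Malliavin derivative yields directly
\[
D_r\Bigl\langle\int_0^t\Phi_n(s)\,dW_s,h\Bigr\rangle_H=\Phi_n(r)^{*}h\,\mathbf{1}_{[0,t]}(r),
\]
which identifies the $\mathcal{L}_{\mathrm{HS}}(U,H)$-valued derivative as $\Phi_n(r)\mathbf{1}_{[0,t]}(r)$.

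Next I would pick a sequence of step integrands $\Phi_n\to\Phi$ in $L^2([0,t];\mathcal{L}_{\mathrm{HS}}(U,H))$, which is possible by density of simple processes. Itô's isometry for Hilbert-valued stochastic integrals (used implicitly in Lemma~\ref{lem:well-defined}) gives $\int_0^t\Phi_n\,dW_s\to W_A(t)$ in $L^2(\Omega;H)$, while the corresponding Malliavin derivatives $\Phi_n(r)\mathbf{1}_{[0,t]}(r)$ converge to $\Phi(r)\mathbf{1}_{[0,t]}(r)$ in $L^2(\Omega;L^2([0,t];\mathcal{L}_{\mathrm{HS}}(U,H)))$ (these being deterministic, convergence is just the $L^2$-convergence of the integrands). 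Closability of the operator $D$—first proved for scalar cylindrical functionals and then extended componentwise to the Hilbert-valued setting—then allows me to pass to the limit and conclude that $W_A(t)\in\mathbb{D}^{1,2}(H)$ with $D_r W_A(t)=S(t-r)Q^{1/2}\mathbf{1}_{[0,t]}(r)$. The $\mathbb{D}^{1,2}(H)$-norm bound follows at once from $\mathbb{E}\|W_A(t)\|_H^2=\mathrm{tr}_H(\gamma_{u(t)})$ and $\int_0^t\|\Phi(r)\|_{\mathrm{HS}}^2\,dr<\infty$.

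The main obstacle I anticipate is the operator-valued nature of the derivative: one must set up $\mathbb{D}^{1,2}(H)$ consistently so that $D_r u(t)$ is an element of $\mathcal{L}_{\mathrm{HS}}(U,H)$ rather than merely of $H$ or $U$, and must justify that closability transfers from the scalar cylindrical setting to this vector-valued framework. A clean workaround is to verify the identity weakly: fix an orthonormal basis $\{k_j\}$ of $H$, establish $\langle u(t),k_j\rangle_H\in\mathbb{D}^{1,2}$ with $D_r\langle u(t),k_j\rangle_H=(Q^{1/2})^{*}S(t-r)^{*}k_j\mathbf{1}_{[0,t]}(r)$ componentwise, and then reassemble the operator via Parseval, using the Hilbert–Schmidt bound $\sum_j\|(Q^{1/2})^{*}S(t-r)^{*}k_j\|_U^2=\|S(t-r)Q^{1/2}\|_{\mathrm{HS}}^2$ combined with \eqref{eq:HS-condition} to secure membership in $\mathbb{D}^{1,2}(H)$.
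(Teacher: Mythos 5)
Your proposal is correct, but it follows a different route from the paper. The paper's proof perturbs the driving noise in a Cameron--Martin direction, $W^{\varepsilon}(s)=W(s)+\varepsilon\int_0^s\eta(\tau)\,d\tau$ with $\eta\in H_W$, computes the perturbed mild solution $u^{\varepsilon}(t)=u(t)+\varepsilon\int_0^t S(t-s)Q^{1/2}\eta(s)\,ds$, reads off $\langle Du(t),\eta\rangle_{H_W}=\int_0^t S(t-s)Q^{1/2}\eta(s)\,ds$ as the directional derivative, identifies $D_r u(t)=S(t-r)Q^{1/2}\mathbf{1}_{[0,t]}(r)$ by a Riesz-type representation, and then checks membership in $\mathbb{D}^{1,2}(H)$ via the integrability condition \eqref{eq:HS-condition}. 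You instead argue from the definition of $\mathbb{D}^{1,2}$ as a completion of cylindrical functionals: verify the formula directly for elementary step integrands (where the stochastic integral is a finite combination of Wiener increments and the derivative follows from the definition of $D$ on $\mathcal{S}$), approximate $\Phi(s)=S(t-s)Q^{1/2}$ in $L^2([0,t];\mathcal{L}_{\mathrm{HS}}(U,H))$, and pass to the limit using the It\^o isometry and closability of $D$, with a componentwise (weak) verification against an orthonormal basis of $H$ to handle the operator-valued target space. What each approach buys: yours is constructive with respect to the space $\mathbb{D}^{1,2}(H)$ as defined in the lemma, making membership an output of the approximation rather than a separate check, at the cost of invoking closability of $D$ in the vector-valued setting; the paper's shift argument is shorter and exploits the linear, additive-noise structure directly, but it tacitly relies on the equivalence of the Cameron--Martin directional derivative with the Malliavin derivative and on a somewhat informal Riesz identification, which your chaos/approximation argument avoids. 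Both proofs ultimately rest on the same quantitative input, namely $\int_0^t\|S(t-r)Q^{1/2}\|_{\mathrm{HS}}^2\,dr<\infty$ from \eqref{eq:HS-condition}.
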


\begin{proof}
Consider the perturbation $W^{\varepsilon}(s) = W(s) + \varepsilon\int_0^s \eta(\tau)\,d\tau$ for $\eta \in H_W$. The perturbed solution is
\[
u^{\varepsilon}(t) = S(t)u_0 + \int_0^t S(t-s)Q^{1/2}\,dW_s^{\varepsilon} = u(t) + \varepsilon\int_0^t S(t-s)Q^{1/2}\eta(s)\,ds.
\]
Taking the Fr\'echet derivative
\[
\left\langle \Mall u(t), \eta \right\rangle_{H_W} = \lim_{\varepsilon \to 0} \frac{u^{\varepsilon}(t) - u(t)}{\varepsilon} = \int_0^t S(t-s)Q^{1/2}\eta(s)\,ds.
\]
By the Riesz representation theorem, $\Mall_r u(t) = S(t-r)Q^{1/2}\mathbf{1}_{[0,t]}(r)$.
Finally, we verify $u(t) \in \mathbb{D}^{1,2}(H)$. To this end, we notice that
\[
\int_0^t \left\|S(t-r)Q^{1/2}\right\|_{\mathrm{HS}}^2\,dr < \infty,
\]
by condition \eqref{eq:HS-condition}.

\end{proof}

Hereafter, we characterise the Malliavin derivative of a function of the SPDE solution $u(t)$. 

\begin{theorem}[Chain rule]
\label{thm:chain-rule-full}
Let $\phi \in C_b^1(H)$ with bounded Fr\'echet derivative. Then for $F = \phi(u(t))$
\[
\Mall_r F = \left(Q^{1/2}\right)^*S(t-r)^*\nabla\phi(u(t))\mathbf{1}_{[0,t]}(r).
\]
\end{theorem}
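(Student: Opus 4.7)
The plan is to mimic the perturbation argument used in the proof of Lemma~\ref{lem:Dru-full} and then pass the Fr\'echet derivative of $\phi$ through by a standard approximation/closability step. For a direction $\eta\in H_W$, consider the Cameron--Martin shift $W^{\varepsilon}_s = W_s + \varepsilon\int_0^s\eta(\tau)\,d\tau$. Because the diffusion coefficient is the state--independent Hilbert--Schmidt operator $Q^{1/2}$, the shifted mild solution is \emph{exactly} linear in $\varepsilon$:
\[
u^{\varepsilon}(t)-u(t) \;=\; \varepsilon\,\Delta_\eta,
\qquad
\Delta_\eta \;:=\; \int_0^t S(t-s)\,Q^{1/2}\,\eta(s)\,ds \;\in\; H,
\]
with no remainder term. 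This removes one of the standard sources of technical difficulty in chain--rule arguments for nonlinear SPDEs.

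Next, I would apply the fundamental theorem of calculus to $\phi\in C_b^1(H)$ along the segment from $u(t)$ to $u^{\varepsilon}(t)$, yielding
\[
\frac{\phi(u^{\varepsilon}(t))-\phi(u(t))}{\varepsilon}
\;=\;
\int_0^1 \bigl\langle \nabla\phi\bigl(u(t)+\sigma\varepsilon\Delta_\eta\bigr),\,\Delta_\eta\bigr\rangle_H\,d\sigma.
\]
Since $\nabla\phi:H\to H$ is bounded and continuous and $\|\Delta_\eta\|_H$ is a deterministic finite quantity, dominated convergence in $L^2(\Omega)$ sends the right-hand side to $\langle \nabla\phi(u(t)),\Delta_\eta\rangle_H$ as $\varepsilon\to 0$. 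Expanding with the adjoint identity $(S(t-s)Q^{1/2})^{*}=(Q^{1/2})^{*}S(t-s)^{*}$ then rewrites the directional Malliavin derivative as the $H_W$-pairing
\[
\langle DF,\eta\rangle_{H_W}
\;=\;
\int_0^t \bigl\langle (Q^{1/2})^{*} S(t-s)^{*} \nabla\phi(u(t)),\,\eta(s)\bigr\rangle_U\,ds,
\]
and since $\eta\in H_W$ is arbitrary, Riesz representation pins down $D_r F = (Q^{1/2})^{*} S(t-r)^{*} \nabla\phi(u(t))\,\mathbf{1}_{[0,t]}(r)$.

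To promote this Gateaux--type identification to the statement $F\in\mathbb{D}^{1,2}$ with the stated Malliavin derivative, I would approximate $F$ by smooth cylindrical functionals $F_N := \phi(P_N u(t))$, where $P_N$ is the orthogonal projection of $H$ onto $\mathrm{span}(e_1,\dots,e_N)$ for a fixed orthonormal basis. Each $F_N$ is a smooth functional of the Gaussian random variables $\langle u(t),e_k\rangle_H$, so by Lemma~\ref{lem:Dru-full} and the finite-dimensional chain rule one obtains $D_rF_N = (Q^{1/2})^{*}S(t-r)^{*}P_N\nabla\phi(P_N u(t))\,\mathbf{1}_{[0,t]}(r)$. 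The main obstacle is then verifying $D_rF_N \to D_rF$ in $L^2([0,t]\times\Omega; U)$: this requires $P_N u(t)\to u(t)$ in $H$ almost surely (Parseval) together with continuity of $\nabla\phi$ to obtain $P_N\nabla\phi(P_N u(t))\to \nabla\phi(u(t))$ in $H$, while the uniform bound $\|\nabla\phi\|_\infty$ delivers a dominating function. Closability of the Malliavin derivative, together with the elementary $L^2$-convergence $F_N\to F$ from the continuous mapping theorem and boundedness of $\phi$, then completes the argument.
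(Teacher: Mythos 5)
Your argument is correct, but it takes a genuinely different route from the paper. The paper's proof is a two-line computation: it invokes the chain rule for Fr\'echet/Malliavin derivatives directly, writes $D_r(\phi(u(t)))=\nabla\phi(u(t))[D_ru(t)]$ with $D_ru(t)=S(t-r)Q^{1/2}\mathbf{1}_{[0,t]}(r)$ from Lemma~\ref{lem:Dru-full}, and then identifies the $U$-valued representative by testing against $v\in U$ and passing to the adjoint $(S(t-r)Q^{1/2})^{*}=(Q^{1/2})^{*}S(t-r)^{*}$. You instead rebuild the result from first principles: the exact (remainder-free) Cameron--Martin shift of the mild solution, the fundamental theorem of calculus plus dominated convergence to identify the directional derivative, and then — correctly recognising that this Gateaux-type identification alone does not yet place $F$ in $\mathbb{D}^{1,2}$ — a cylindrical approximation $F_N=\phi(P_Nu(t))$ combined with closability of $D$. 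Your route is longer but more self-contained: it justifies the infinite-dimensional chain rule rather than quoting it, which is arguably a strengthening, since the paper's class $\mathcal S$ of cylindrical functionals and its chain rule are only stated for smooth $f$, whereas $\phi$ is merely $C_b^1$. The one point you should tighten is precisely there: $F_N$ is \emph{not} a smooth cylindrical functional in the paper's sense, because $f(x_1,\dots,x_N)=\phi\bigl(\sum_k(x_k+\langle S(t)u_0,e_k\rangle_H)e_k\bigr)$ is only $C_b^1(\mathbb{R}^N)$; you need either the standard extension of the finite-dimensional Malliavin chain rule to $C_b^1$ (or Lipschitz) functions of $\mathbb{D}^{1,2}$ random vectors, or one additional mollification of $\phi$ in finite dimensions before taking limits. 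With that routine step made explicit, your convergence argument ($P_Nu(t)\to u(t)$ a.s., continuity and boundedness of $\nabla\phi$, the dominating bound $\|Q^{1/2}\|_{\mathcal L_{\mathrm{HS}}(U,H)}Me^{\omega(t-r)}\|\nabla\phi\|_\infty$, and closedness of $D$) is complete and yields exactly the stated formula.
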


\begin{proof}
By the chain rule for Fr\'echet derivatives
\[
\Mall_r(\phi(u(t))) = \nabla\phi(u(t))[\Mall_r u(t)] = \left\langle \nabla\phi(u(t)), \Mall_r u(t) \right\rangle_H.
\]
Since $\Mall_r u(t) = S(t-r)Q^{1/2}\mathbf{1}_{[0,t]}(r)$ takes values in $\mathcal{L}_{\mathrm{HS}}(U,H)$, we need the adjoint action. For $v \in U$ we have
\[
\left\langle \Mall_r F, v \right\rangle_U = \left\langle \nabla\phi(u(t)), S(t-r)Q^{1/2}v \right\rangle_H = \left\langle \left(Q^{1/2}\right)^*S(t-r)^*\nabla\phi(u(t)), v \right\rangle_U.
\]
Therefore $\Mall_r F = \left(Q^{1/2}\right)^*S(t-r)^*\nabla\phi(u(t))\mathbf{1}_{[0,t]}(r)$.

\end{proof}

\subsection{Skorokhod Integral}
\label{app:a5}
For $\eta \in H_W$, let us define the operator 
$\Mall u(t): H_W \to H$ as
\[
\Mall u(t)\eta = \int_0^t S(t-s)Q^{1/2}\eta(s)\,ds
\]
with adjoint $\Mall u(t)^*: H \to H_W$
\[
\Mall u(t)^*h = \left(Q^{1/2}\right)^*S(t-\cdot)^*h\,\mathbf{1}_{[0,t]}(\cdot).
\]

\begin{definition}[Skorokhod integral]
The Skorokhod integral $\delta: \mathrm{Dom}(\delta) \subset L^2(\Omega; H_W) \to L^2(\Omega)$ is the adjoint of $\Mall$
\begin{equation}
\mathbb{E}[\left\langle \Mall F, v \right\rangle_{H_W}] = \mathbb{E}[F\delta(v)]
\label{SK}
\end{equation}
for all $F \in \mathbb{D}^{1,2}$ and $v \in \mathrm{Dom}(\delta)$.
\end{definition}

It can be shown that for deterministic $v \in H_W$, the Skorokhod integral 
reduces to the It\^o integral
\[
\delta(v) = \int_0^t \left\langle v(s), dW_s \right\rangle_U = W(v).
\]

\begin{proposition}[Integration by parts]
Let $\gamma_t$ be the Malliavin covariance operator \eqref{MCO}. For $F = \left\langle u(t), h \right\rangle_H$ with $h \in \mathrm{Ran}(\gamma_t)$, and \(v(r) = (Q^{1/2})^* S(t-r)^* \gamma_t^{-1} h\,\mathbf{1}_{[0,t]}(r)\) we have
\[
\mathbb{E}[F\delta(v)] = \left\langle h, h \right\rangle_H.
\]
\end{proposition}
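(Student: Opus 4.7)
The plan is to apply the duality definition of the Skorokhod integral directly: since $\delta$ is by definition the adjoint of $D$, we have
\[
\mathbb{E}[F\,\delta(v)] = \mathbb{E}\bigl[\langle DF,\, v\rangle_{H_W}\bigr],
\]
so the task reduces to computing $\langle DF, v\rangle_{H_W}$ and observing that it is deterministic. First I would verify the hypotheses for the duality: $v$ is a deterministic element of $H_W$ (hence in $\mathrm{Dom}(\delta)$ by the bounds established in the proof of Theorem~\ref{thm:SkorokhodIto}), and $F = \langle u(t), h\rangle_H$ is a linear functional of the Gaussian process $u(t)$, which belongs to $\mathbb{D}^{1,2}$ by Lemma~\ref{lem:Dru-full} and the chain rule (Theorem~\ref{thm:chain-rule-full}) applied to the bounded linear map $\phi(\cdot) = \langle \cdot, h\rangle_H$.

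Next I would compute $DF$. Applying Theorem~\ref{thm:chain-rule-full} with $\nabla\phi \equiv h$ yields
\[
D_r F = \bigl(Q^{1/2}\bigr)^{*} S(t-r)^{*} h\,\mathbf{1}_{[0,t]}(r).
\]
Then, using the adjoint identity $\langle T^{*}x, T^{*}y\rangle_U = \langle x, T T^{*} y\rangle_H$ with $T = S(t-r) Q^{1/2}$, I can rewrite
\[
\langle D_r F,\, v(r)\rangle_U
= \bigl\langle h,\; S(t-r)\,Q^{1/2}\bigl(Q^{1/2}\bigr)^{*} S(t-r)^{*}\,\gamma_{u(t)}^{\dagger} h\bigr\rangle_H .
\]
Integrating over $r\in[0,t]$, performing the change of variables $s = t-r$, and recognising the expression for $\gamma_{u(t)}$ in \eqref{MCO} gives
\[
\langle DF, v\rangle_{H_W}
= \bigl\langle h,\; \gamma_{u(t)}\gamma_{u(t)}^{\dagger}\, h\bigr\rangle_H
= \bigl\langle h,\; \Pi_{\overline{\mathrm{Ran}\,\gamma_{u(t)}}}\, h\bigr\rangle_H .
\]
Since $h\in\mathrm{Ran}(\gamma_{u(t)})\subseteq\overline{\mathrm{Ran}\,\gamma_{u(t)}}$, the projection acts as the identity on $h$, so the pairing collapses to $\langle h, h\rangle_H$.

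The final step is trivial: the right-hand side is a deterministic scalar, so taking expectations preserves it, and \eqref{SK} yields $\mathbb{E}[F\,\delta(v)] = \langle h,h\rangle_H$. I do not anticipate a serious obstacle; the only delicate point is ensuring that the domain conditions for the duality \eqref{SK} are actually met, which amounts to checking that the deterministic $v$ is square-integrable in $H_W$ (already done in the proof of Theorem~\ref{thm:SkorokhodIto}) and that $F$ admits a Malliavin derivative with the expected chain-rule form, which follows because $\langle\cdot,h\rangle_H$ is bounded and linear so Theorem~\ref{thm:chain-rule-full} applies directly.
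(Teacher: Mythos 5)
Your argument is correct, and it is exactly the proof the paper leaves implicit: the paper states this proposition without proof, and the intended justification is precisely the duality \eqref{SK} combined with $D_rF=(Q^{1/2})^*S(t-r)^*h\,\mathbf{1}_{[0,t]}(r)$ and the same Gramian computation used in Theorem~\ref{thm:covering-property-spde}, ending with $\gamma_{u(t)}\gamma_{u(t)}^{\dagger}=\Pi_{\overline{\operatorname{Ran}\gamma_{u(t)}}}$ and the fact that $h\in\operatorname{Ran}(\gamma_{u(t)})$ is fixed by the projection. One small caveat: Theorem~\ref{thm:chain-rule-full} is stated for $\phi\in C_b^1(H)$, and the linear functional $\phi(\cdot)=\langle\cdot,h\rangle_H$ is continuous but not bounded, so strictly speaking the chain rule as stated does not cover it; this is harmless, since $F=\langle S(t)u_0,h\rangle_H+\int_0^t\langle (Q^{1/2})^*S(t-s)^*h,\,dW_s\rangle_U$ is a constant plus a Wiener integral, so $D_rF$ follows directly from the definition of $D$ on Wiener integrals (or from Lemma~\ref{lem:Dru-full} by linearity), yielding the same formula you use. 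Alternatively, since $v$ is deterministic, $\delta(v)=W(v)$ and the claim is a one-line It\^o-isometry covariance computation between two Wiener integrals, which gives the same integral and the same conclusion.
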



We have seen that the solution to the SPDE \eqref{eq:spde} is Gaussian with mean $S(t)u_0$ and covariance operator \eqref{MCO}. This covariance operator satisfies the following recursion.

\begin{lemma}[Covariance recursion]
For all $s, r \geq 0$
\[
\gamma_{t+s} =\gamma_t  + S(t)\gamma_s S(t)^*.
\]
Consequently, $S(t)\mathrm{Ran}(\gamma_s) \subset \mathrm{Ran}(\gamma_{t+s} )$ for all $t, s \geq 0$.
\end{lemma}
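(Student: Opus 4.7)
The plan is to derive the recursion directly from the closed-form integral representation
\[
\gamma_{u(\tau)} = \int_0^{\tau} S(r)\,Q^{1/2}(Q^{1/2})^*\,S(r)^*\,dr
\]
established in Theorem~\ref{thm:malliavin-covariance-spde}. Applying this formula at $\tau = t+s$ and splitting the integration interval as $[0, t+s] = [0,t] \cup [t, t+s]$, the first piece is, by definition, $\gamma_{u(t)}$. On the second piece, I would perform the change of variables $r = t + \rho$ with $\rho \in [0, s]$ and invoke the semigroup identity $S(t+\rho) = S(t)\,S(\rho)$ along with its adjoint $S(t+\rho)^* = S(\rho)^*\,S(t)^*$. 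Since $S(t)$ and $S(t)^*$ are fixed bounded operators on $H$, they can be pulled out of the Bochner integral, leaving exactly the defining integral for $\gamma_{u(s)}$ inside. This yields $S(t)\,\gamma_{u(s)}\,S(t)^*$ which, added to $\gamma_{u(t)}$, gives the claimed recursion.

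For the range inclusion, the recursion combined with positivity of $\gamma_{u(t)}$ produces the operator inequality
\[
\gamma_{u(t+s)} \;\geq\; S(t)\,\gamma_{u(s)}\,S(t)^* \;=\; \bigl(S(t)\,\gamma_{u(s)}^{1/2}\bigr)\bigl(S(t)\,\gamma_{u(s)}^{1/2}\bigr)^*.
\]
Douglas's factorization (range-majorization) lemma then furnishes a bounded operator $C$ on $H$ satisfying $S(t)\,\gamma_{u(s)}^{1/2} = \gamma_{u(t+s)}^{1/2}\,C$. Composing on the right with $\gamma_{u(s)}^{1/2}$ gives $S(t)\,\gamma_{u(s)} = \gamma_{u(t+s)}^{1/2}\,C\,\gamma_{u(s)}^{1/2}$, so every $h = \gamma_{u(s)} v \in \mathrm{Ran}(\gamma_{u(s)})$ satisfies $S(t) h = \gamma_{u(t+s)}^{1/2}\bigl(C\,\gamma_{u(s)}^{1/2} v\bigr)$, placing $S(t) h$ in the Cameron--Martin space $\mathrm{Ran}(\gamma_{u(t+s)}^{1/2})$. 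To upgrade this to membership in $\mathrm{Ran}(\gamma_{u(t+s)})$, I would exploit the extra half-power of $\gamma_{u(s)}^{1/2}$ sitting on the right: applying Douglas's lemma a second time to the factorization $S(t)\gamma_{u(s)} = \gamma_{u(t+s)}^{1/2}(C\,\gamma_{u(s)}^{1/2})$ yields a bounded $D$ with $S(t)\gamma_{u(s)} = \gamma_{u(t+s)}\,D$, which gives the stated inclusion.

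The main obstacle is this final bootstrapping step, from a Cameron--Martin inclusion in $\mathrm{Ran}(\gamma^{1/2})$ to a genuine range inclusion in $\mathrm{Ran}(\gamma)$. The recursion itself is a routine Bochner manipulation driven by the semigroup property, but promoting the Douglas output from $\mathrm{Ran}(\gamma_{u(t+s)}^{1/2})$ to $\mathrm{Ran}(\gamma_{u(t+s)})$ requires carefully tracking how the two factors of $\gamma_{u(s)}^{1/2}$ available in $\gamma_{u(s)} = \gamma_{u(s)}^{1/2}\circ\gamma_{u(s)}^{1/2}$ match the single factor of $\gamma_{u(t+s)}^{1/2}$ produced by a single application of Douglas's lemma; without this second factor, only the weaker Cameron--Martin inclusion is accessible.
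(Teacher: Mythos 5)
Your derivation of the recursion itself is exactly the paper's argument: split $\int_0^{t+s}$ at $t$, substitute $r=t+\rho$, and use $S(t+\rho)=S(t)S(\rho)$ (and its adjoint) to pull the bounded operators $S(t)$, $S(t)^*$ out of the Bochner integral. That part is correct and matches the paper's computation.

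The gap is in your final ``second application of Douglas's lemma.'' Douglas's criterion for a factorization $A=BD$ with $D$ bounded is the majorization $AA^*\le \lambda^2 BB^*$; with $A=S(t)\gamma_{u(s)}$ and $B=\gamma_{u(t+s)}$ this would require $S(t)\gamma_{u(s)}^2S(t)^*\le \lambda^2\gamma_{u(t+s)}^2$, which does not follow from the inequality you actually have, $S(t)\gamma_{u(s)}S(t)^*\le\gamma_{u(t+s)}$, since squaring is not operator monotone. Nor does the identity $S(t)\gamma_{u(s)}=\gamma_{u(t+s)}^{1/2}\bigl(C\,\gamma_{u(s)}^{1/2}\bigr)$ let you ``apply Douglas again'': to peel off a second factor of $\gamma_{u(t+s)}^{1/2}$ you would need $\mathrm{Ran}\bigl(C\,\gamma_{u(s)}^{1/2}\bigr)\subseteq\mathrm{Ran}\bigl(\gamma_{u(t+s)}^{1/2}\bigr)$, whereas Douglas only guarantees that $C$ maps into the \emph{closure} of that range. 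What your argument legitimately yields is the Cameron--Martin inclusion $S(t)\,\mathrm{Ran}(\gamma_{u(s)})\subseteq\mathrm{Ran}\bigl(\gamma_{u(t+s)}^{1/2}\bigr)$, not the stated full-range inclusion. Moreover, no argument based solely on the operator inequality can close this gap: in general $0\le R\le T$ does not imply $\mathrm{Ran}(R)\subseteq\mathrm{Ran}(T)$ (take $R$ rank one, spanned by a vector in $\mathrm{Ran}(T^{1/2})\setminus\mathrm{Ran}(T)$), so any proof of the full-range statement would have to exploit the specific integral structure of $\gamma_{u(t)}$ and $\gamma_{u(t+s)}$. For what it is worth, the paper itself offers no more than the assertion that the range inclusion ``follows immediately'' after the recursion, so the obstacle you identify is genuine; but the bootstrapping step you propose to overcome it does not work as written.
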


\begin{proof}
By direct computation,
\begin{align}
\gamma_{t+s} &= \int_0^{t+s} S(r)Q^{1/2}\left(Q^{1/2}\right)^*S(r)^*\,dr\nonumber\\
&= \int_0^t S(r)Q^{1/2}\left(Q^{1/2}\right)^*S(r)^*\,dr + \int_t^{t+s} S(r)Q^{1/2}\left(Q^{1/2}\right)^*S(r)^*\,dr\nonumber\\
&= \gamma_t + S(t)\left(\int_0^s S(\sigma)Q^{1/2}\left(Q^{1/2}\right)^*S(\sigma)^*\,d\sigma\right)S(t)^*\nonumber\\
&= \gamma_t  + S(t)\gamma_s  S(t)^*\nonumber.
\end{align}
The range inclusion follows immediately.

\end{proof}


\end{document}